\providecommand{\keywords}[1]{\textbf{\textit{Keywords:}} #1}
\newcommand{\Dim}{\textrm{dim}}
\newcommand{\Span}{\textrm{span}}
\newcommand {\bC} {\mathbb {C}}
\newcommand {\bR} {\mathbb {R}}
\newcommand {\bQ} {\mathbb {Q}}
\newcommand {\bK} {\mathcal{K}}
\newcommand{\B} {\mathcal{B}}
\newcommand{\C} {\mathcal{C}}
\newcommand{\D} {\mathcal{D}}
\newcommand{\HH} {{\mathcal{H}}}
\newcommand{\I} {\mathcal{I}}
\newcommand{\bbK} {\mathcal{K}}
\newcommand{\tX}{\widetilde{X}}
\newtheorem{theorem}{Theorem}
\newtheorem{proposition}[theorem]{Proposition}
\newtheorem{lemma}[theorem]{Lemma} \newtheorem{definition}{Definition}
\newtheorem{notation}[definition]{Notation}
\newtheorem{corollary}{Corollary} \newtheorem{example}{Example}
\newtheorem{remark}{Remark} 
\newtheorem{question}{Question}
\newtheorem{problem}[question]{Problem}
\DeclareRobustCommand{\cev}[1]{%
  \mathpalette\do@cev{#1}%
}
\newcommand{\do@cev}[2]{%
  \fix@cev{#1}{+}%
  \reflectbox{$\m@th#1\vec{\reflectbox{$\fix@cev{#1}{-}\m@th#1#2\fix@cev{#1}{+}$}}$}%
  \fix@cev{#1}{-}%
}
\newcommand{\fix@cev}[2]{%
  \ifx#1\displaystyle
    \mkern#23mu
  \else
    \ifx#1\textstyle
      \mkern#23mu
    \else
      \ifx#1\scriptstyle
        \mkern#22mu
      \else
        \mkern#22mu
      \fi
    \fi
  \fi
}
\begin{document}
          \numberwithin{equation}{section}

          \title[On  $Q$-deformations of Postnikov-Shapiro algebras]{On  $Q$-deformations of Postnikov-Shapiro algebras}
\author[A.N. Kirillov]{Anatol N. Kirillov }
\address{Research Institute for Mathematical Sciences
Kyoto University,
 606-8502, Kyoto, 
Japan;
The Kavli Institute for the Physics and Mathematics of 
the Universe, 277-8583, Kashiwa, Japan
}
\email{kirillov@kurims.kyoto-u.ac.jp}

\author[G. Nenashev]{Gleb Nenashev}
\address{ Department of Mathematics,
   Stockholm University,
   S-10691, Stockholm, Sweden}
\email{nenashev@math.su.se}

\begin{abstract}For any given loopless graph $G$,  we introduce $Q$ - deformations of its Postnikov-Shapiro algebras counting spanning trees, counting spanning forests and $Q$ - deformations of internal algebra of $G$. We determine the total dimension of the algebras; our proof also gives a new proof of the formula for the total dimensions of the usual Postnikov-Shapiro algebras. Furthermore, we construct "square-free" definition of usual internal algebra of~$G$.
\end{abstract}

\keywords{Commutative algebra, Spanning trees and forests, Score vectors}

\thanks{The paper was presented at FPSAC'17 as a talk. Extended abstract was published in {Séminaire Lotharingien de Combinatoire, 78B.55, FPSAC (2017) 12 pp.}}

\maketitle

\section{Introduction and main results}

The Postnikov-Shapiro algebras (PS-algebras for short) have been introduced 
and studied in~\cite{PS}. There are a few generalizations of that algebras: in~\cite{AP} and~\cite{HR}, under the name {\it zonotopal algebras}, a genrralization of PS-algebras  algebra was introduced for (real) arrangements.  In fact, 
this topic has its origin in earlier papers~\cite{SS} and~\cite{PSS}, which 
were motivated by the following problem had  been posed by V.\,Arnold in~\cite{Ar}:  

 Describe algebra $ {\mathcal{C}_n}$  generated by the curvature forms of tautological Hermitian linear bundles over the type $A$ complete flag variety ${\mathcal{F}}l_n$.

Surprisingly enough, it was observed and conjectured in \cite{SS}, that 
$\dim_{Q} {\mathcal{C}}_n= {\mathcal{F}}_n$, where ${\mathcal{F}}_n$ denotes 
the number of spanning forests of the complete graph $K_n$  on $n$ labeled vertices.   This Conjecture has been proved in \cite{PSS}, and became a starting 
point for a wide variety of generalizations, including discovery of PS-algebras.

The PS-algebras have a number of interesting properties, including an explicit formula for their Hilbert polynomials. Also these algebra are related to Orlik-Terao~algebras~\cite{OTe}, for more details, see for example~\cite{B}.

 In our paper we will use the following basic notation:
\begin{notation}
{\rm We fix a field of zero characteristic $\bbK$ (for example $\bC$ or~$\bR$).

We will work only with graphs without loops, but possibly with multiple edges.  We denote by 
$E(G)$ and $V(G)$ the set of edges and vertices of $G$ respectively. The  cardinalities of  $E(G)$ and $V(G)$ are denoted by $e(G)$ and $v(G)$ resp. The number of connected components of $G$ is denoted by $c(G)$. 

 We denote the set $\{1,2,\ldots,(a-1),a\}$ by $[a]$.
}
\end{notation}

The following  algebra $\C_G$  (counting spanning forests)  associated to an arbitrary vertex-labeled graph $G$ was introduced in  \cite {PS}.
 Let $G$ be a graph without loops on the vertex set $[n].$ 
 Let  $\Phi_G$ be the graded commutative algebra over $\bbK$ 
generated by the variables $\phi_e, e \in G$, with the defining relations:
$$(\phi_e)^2 = 0, \quad \text {for every edge}\; e\in G.$$
Let $\C_G$ be the subalgebra of  $\Phi_G$ generated by the elements
$$X_i =\sum_{e\in G} c_{i,e} \phi_e,$$
for  $i \in  [n], $ where  
\begin{equation}\label{eq:def_cie}
c_{i,e}=\begin{cases} \;\;\;1\quad \text{if}\; e=(i,j),\ i<j;\\
                                            -1\quad\text{if}\; e=(i,j),\ i>j;\\
                                             \;\;\;0\quad \text{otherwise}.
\end{cases}
\end{equation}
Observe that we
 assume that $\C_G$ contains $1$.

\begin{example} Consider graph $G$ as on picture. 

 {\begin{figure}[htb!]
\centering
\includegraphics[scale=0.8]{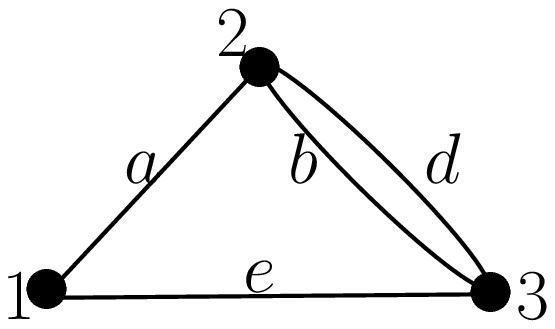}
\end{figure}
}

$$X_1=\phi_a+\phi_e;$$
$$X_2=-\phi_a+\phi_b+\phi_d;$$
$$X_3=-\phi_b-\phi_d-\phi_e.$$

The algebra $\C_G$ is generated by $X_1,X_2,X_3$, namely $$\C_G:=span\{1,\ X_1,\ X_2,\ X_1^2,\ X_1X_2,\ X_2^2,\  X_1^2X_2,\ X_1X_2^2,\ X_2^3,\ X_1^2X_2^2\}.$$
\end{example}

\medskip

Let us describe all relations between $X_i$.
{\rm For given a graph $G$,
consider the ideal $J_{G}^k$ in the ring $\bbK[x_1,\cdots,x_n]$ generated by
$$p_I^{(k)}=\left(\sum_{i\in I} x_i\right)^{ d_I+k},$$
where $I$ ranges over all nonempty subsets of vertices, and $d_I$ is the total number of edges between vertices in $I$ and vertices outside~$I$, i.e., belonging to $V(G)\setminus I$.  Define the algebra   ${\B}_G, \B_G^T, \B_G^{In}$ as the quotient $\bbK[x_1,\dots,x_n]/ J_{G}^k,$ for $k=1,0,-1$ resp.}

\begin{theorem}[cf.~\cite{PSS, PS}]
\label{thm:PS}

 For any graph $G$, the algebras  ${\B}_{G}$ and ${\C}_{G}$ are isomorphic,
 their total dimension over $\bbK$ is equal to the number of spanning forests in $G$.

Moreover, the dimension of the $k$-th graded component of these algebras  equals
the number of spanning forests $F$ of $G$ with external activity $e(G)-e(F)-k$.
\end{theorem}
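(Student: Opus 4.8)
The plan is to establish the isomorphism $\B_G \cong \C_G$ first, then read off the dimension statements from a monomial basis controlled by spanning forests.

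Step 1: Construct a surjection $\bbK[x_1,\dots,x_n]\to\C_G$ by $x_i\mapsto X_i$ and show its kernel is exactly $J_G^0$. For the inclusion $J_G^0\subseteq\ker$, fix a nonempty vertex subset $I$ and compute $\left(\sum_{i\in I}X_i\right)^{d_I}$ in $\Phi_G$. The key observation is that $\sum_{i\in I}X_i=\sum_{e}\left(\sum_{i\in I}c_{i,e}\right)\phi_e$, and $\sum_{i\in I}c_{i,e}$ is nonzero precisely for the $d_I$ edges $e$ crossing between $I$ and its complement (each contributes $\pm1$), while edges internal to $I$ or to the complement cancel. Since the $\phi_e$ square to zero, raising this linear form to the power $d_I$ gives a sum of products of $d_I$ distinct crossing-edge variables; raising to power $d_I$ is exactly the threshold where one might worry, but the monomial of top degree $d_I+0$ still has repeated factors only if... here one checks $\left(\sum_{i\in I}X_i\right)^{d_I}\ne 0$ in general but $\left(\sum_{i\in I}X_i\right)^{d_I+1}=0$ since any degree-$(d_I+1)$ monomial in $d_I$ square-zero variables vanishes — wait, that is the relation for $\B_G$ (the $k=1$ case); for $\C_G$ one uses the sharper vanishing statement matching the $d_I$ exponent, which follows from a more careful sign/cancellation analysis of which square-free monomials actually appear.

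Step 2: Produce an explicit spanning basis of $\Phi_G$-degree $\le e(G)$ indexed by spanning forests. The natural candidate: order the edges $e_1<\dots<e_m$, and for a spanning forest $F$ associate the monomial $\prod_{e\in E(G)\setminus F}\phi_e$ times a correction, or dually associate to $F$ a monomial in the $X_i$ of degree $e(F)$ decorated by the externally active edges. The combinatorial heart is the standard deletion-contraction / broken-circuit argument: externally active edges of a forest (edges $e\notin F$ that are the smallest in the unique cycle of $F+e$) index a triangular change of basis, showing the images of these monomials are linearly independent in $\C_G$ and span it. This simultaneously yields $\dim_\bbK\C_G=\sum_F 1=$ (number of spanning forests) by the classical activity decomposition of the Tutte polynomial, and refines to the graded statement: a forest $F$ contributes in degree $e(F)$, equivalently in the $k$-th graded component when $e(G)-e(F)$ equals external activity plus $k$... matching the asserted index $e(G)-e(F)-k$ as external activity.

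Step 3: Combine. The surjection of Step 1 together with the dimension count of Step 2 (an upper bound $\dim\C_G\le\#\{\text{spanning forests}\}$ from spanning, and the lower bound from linear independence) forces $\ker=J_G^0$ and the isomorphism $\B_G\cong\C_G$, and the graded refinement transfers verbatim.

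The main obstacle is Step 1's sharp vanishing/non-vanishing dichotomy: showing $J_G^0$ (not merely $J_G^1$) is contained in the kernel requires understanding exactly which square-free monomials survive in $\left(\sum_{i\in I}X_i\right)^{d_I}$ with their signs, since naive degree counting in square-zero variables only gives the weaker $k=1$ relations; one must exploit that the crossing edges split into those oriented out of $I$ and those oriented into $I$, and a signed multinomial cancellation kills precisely the excess. Equivalently, one can sidestep this by proving $\B_G$ and $\C_G$ have the same Hilbert series independently (via the forest basis of Step 2 applied to $\B_G$ through its known presentation) and using the obvious surjection in one direction — but either way, reconciling the exponent $d_I+k$ with the square-free structure of $\Phi_G$ is where the real work lies.
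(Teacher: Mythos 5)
There is a genuine error at the core of your Step 1: you have mixed up the two ideals. As defined in the paper, $\B_G$ (the forest-counting algebra in Theorem~\ref{thm:PS}) is the quotient by $J_G^{1}$, generated by $p_I^{(1)}=\left(\sum_{i\in I}x_i\right)^{d_I+1}$; the ideal $J_G^{0}$ with exponents $d_I$ presents the \emph{trees} algebra $\B_G^{T}$, a strictly smaller quotient. Your claim that $\left(\sum_{i\in I}X_i\right)^{d_I}=0$ holds in $\C_G$ via a ``signed multinomial cancellation'' is false: in $\Phi_G$ the linear form $\sum_{i\in I}X_i$ is $\sum_{e\in\mathrm{cut}(I)}\pm\phi_e$, a sum of $d_I$ \emph{distinct} square-zero variables with coefficients $\pm1$, so its $d_I$-th power equals $d_I!\,\prod_{e\in\mathrm{cut}(I)}(\pm\phi_e)\neq 0$ in characteristic zero (already for a single edge and $I=\{1\}$ one gets $X_1^{1}=\phi_e\neq 0$). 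There is no cancellation to exploit; the ``main obstacle'' you describe at the end is an artifact of this mix-up. The correct easy containment is $J_G^{1}\subseteq\ker$, which follows from exactly the naive degree count you dismiss, and the real work in the Postnikov--Shapiro argument is the reverse direction: producing a forest-indexed monomial basis (via a monotone monomial ideal / deformation or Gr\"obner-type argument) to show the kernel is no larger and to get the graded refinement. Your Step 2 gestures at this but stays at the level of ``standard broken-circuit argument'' and moreover wavers on the degree in which a forest $F$ contributes (it is $e(G)-e(F)-\mathrm{ea}(F)$, not $e(F)$), so the graded statement is not actually established either.

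For context: the paper does not prove Theorem~\ref{thm:PS} itself, citing \cite{PSS,PS}; its own contribution is an independent proof of the total-dimension part, obtained in \S\ref{basis} by identifying $\dim\Psi_G$ with the number of distinct score vectors (Theorem~\ref{thm:different}, via the idempotent-like basis $\alpha_{E'}$ and the Hadamard-product coordinates of Lemma~\ref{lem:basis}) and combining this with Theorem~\ref{thm:same} and the Kleitman--Winston correspondence between forests and score vectors. If you want a self-contained proof along your lines, fix Step 1 to the ideal $J_G^{1}$ and then carry out Step 2 in full detail; that is where the substance of the theorem lies.
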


Algebras ${\C}_G={\B}_G$ is called PS algebra counting spanning forests or external algebra.
 We will discuss cases $\B^T_G$ and $\B^{In}_G$ in \S~5, these algebras are called algebra counting spanning trees (central algebra) and internal algebra.

In particular, the second part of Theorem~\ref{thm:PS} implies that the Hilbert polynomial of ${\C}_{G}$ is a specialization of the Tutte polynomial 
of~$G$.   

\begin{corollary}
Given a graph $G$, the Hilbert polynomial $\HH_{C_G}(t)$ of the algebra $\C_G$ is given by
$$\HH  _{\C_G}(t)=T_G\left(1+t,\frac{1}{t}\right)\cdot t^{e(G)-v(G)+c(G)}.$$ 
\end{corollary}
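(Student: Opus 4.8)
The plan is to derive the corollary directly from the second part of Theorem~\ref{thm:PS} by comparing two generating-function identities. First I would recall that the Tutte polynomial of $G$ has the "spanning subgraph" expansion
\[
T_G(x,y)=\sum_{F}(x-1)^{c(F)-c(G)}(y-1)^{e(F)-v(G)+c(F)},
\]
but for our purposes the more convenient form is the one that organizes spanning forests (i.e.\ acyclic spanning subgraphs $F$, which have $c(F)=v(G)-e(F)$) by their internal and external activities with respect to a fixed edge ordering: classically, $T_G(x,y)=\sum_{F\ \mathrm{spanning\ forest}} x^{\mathrm{ia}(F)} y^{\mathrm{ea}(F)}$ where the sum is over spanning forests $F$ that are maximal on each component — actually I should be careful here, since in the connected case this is the standard "spanning trees weighted by activities" theorem, and for general $G$ one sums over spanning forests that restrict to spanning trees of each connected component. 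Let me instead use the corank–nullity generating function together with the activities theorem in the form that is genuinely needed.

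The cleanest route: by Theorem~\ref{thm:PS}, $\dim (\C_G)_k$ equals the number of spanning forests $F$ of $G$ with external activity $e(G)-e(F)-k$. Hence
\[
\HH_{\C_G}(t)=\sum_{k\ge 0}\dim(\C_G)_k\, t^k=\sum_{F}t^{\,e(G)-e(F)-\mathrm{ea}(F)},
\]
the sum over all spanning forests $F$ of $G$. Now I would invoke the activities expansion of the Tutte polynomial. For a connected graph, $T_G(x,y)=\sum_{T}x^{\mathrm{ia}(T)}y^{\mathrm{ea}(T)}$ over spanning trees $T$, and in general one has the analogous statement summing over maximal spanning forests, with the substitution of variables tracking that the contribution of a forest $F$ to the corank is $e(G)-v(G)+c(G)-(\text{co-tree size adjustments})$. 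The key bookkeeping identity is $\mathrm{ia}(F)+\mathrm{ea}(F)+(\text{number of non-active internal/external edges})=e(G)$, together with the fact that a spanning forest contributing to $\C_G$ has exactly $v(G)-c(G)$ edges, so $e(G)-e(F)=e(G)-v(G)+c(G)$ is constant across all maximal forests. Substituting $x=1+t$ and $y=1/t$ into the activities expansion, the factor $y^{\mathrm{ea}(F)}=t^{-\mathrm{ea}(F)}$ appears, and one multiplies through by $t^{e(G)-v(G)+c(G)}$ to clear denominators and to convert $t^{-\mathrm{ea}(F)}$ into $t^{\,e(G)-v(G)+c(G)-\mathrm{ea}(F)}=t^{\,e(G)-e(F)-\mathrm{ea}(F)}$; meanwhile the substitution $x=1+t$ in $x^{\mathrm{ia}(F)}$ must be reconciled with the fact that $\HH_{\C_G}$ only records external activity. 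This forces the use of the finer two-variable statement — namely that the number of spanning forests with a given external activity, \emph{summed over all internal activities}, is exactly what Theorem~\ref{thm:PS} counts, which matches $\sum_{\mathrm{ia}}\binom{\mathrm{ia}(F)}{j}$-type coefficients produced by expanding $(1+t)^{\mathrm{ia}(F)}$ — wait, that is not quite dimension-matching either.

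Let me restate the intended clean argument. The honest claim is: $\HH_{\C_G}(t)\cdot t^{-(e(G)-v(G)+c(G))}=\sum_F t^{-\mathrm{ea}(F)}\cdot(\text{something in }t)$ and I want this to equal $T_G(1+t,1/t)$. The correct precise fact is that the number of spanning forests of $G$ (of any size, not just maximal) with external activity $a$ equals the coefficient extracted from $T_G(1+t,1/t)\,t^{e(G)-v(G)+c(G)}$ at $t^{e(G)-?-a}$; the reconciliation of "all spanning forests" on the algebra side with "maximal spanning forests weighted by $(1+t)^{\mathrm{ia}}$" on the Tutte side is exactly the combinatorial content that makes the corollary true, because expanding $(1+t)^{\mathrm{ia}(T)}$ over a maximal forest $T$ distributes a term $t^{j}$ for each way of deleting $j$ of the $\mathrm{ia}(T)$ internally-active edges, and deleting internally active edges from a maximal forest produces precisely the (non-maximal) spanning forests, with external activity shifting correspondingly. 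So the proof steps are: (1) write $\HH_{\C_G}(t)=\sum_{F\ \mathrm{all\ forests}}t^{e(G)-e(F)-\mathrm{ea}(F)}$ using Theorem~\ref{thm:PS}; (2) write the activities expansion $T_G(x,y)=\sum_{T\ \mathrm{maximal}}x^{\mathrm{ia}(T)}y^{\mathrm{ea}(T)}$; (3) expand $x=1+t$, $y=1/t$ and use the internal-deletion bijection $\{(T,S):S\subseteq \mathrm{IntAct}(T)\}\leftrightarrow\{\text{all spanning forests }F\}$, checking that under $F=T\setminus S$ one has $e(F)=v(G)-c(G)-|S|$ and $\mathrm{ea}(F)=\mathrm{ea}(T)$, so $t^{e(G)-v(G)+c(G)}\cdot t^{|S|-\mathrm{ea}(T)}=t^{e(G)-e(F)-\mathrm{ea}(F)}$; (4) sum. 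The main obstacle is step (3): getting the activities bijection and the activity-preservation statement exactly right, since external activity is defined relative to the edge ordering and one must verify that deleting internally active edges of a maximal forest leaves the external activity of the remaining forest unchanged and produces every spanning forest exactly once — this is where the paper presumably cites the standard theory of Tutte polynomial activities rather than reproving it.
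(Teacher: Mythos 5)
Your derivation is correct and is essentially the argument the paper has in mind: the paper states the corollary as an immediate consequence of the second part of Theorem~\ref{thm:PS} together with the standard activities expansion of the Tutte polynomial, which is exactly what your steps (1)--(4) spell out. The key lemma you defer in step (3) is indeed a true, classical fact (Crapo's interval decomposition restricted to independent sets: the fundamental cycle of an externally active edge avoids all internally active edges, so $(T,S)\mapsto T\setminus S$ with $S\subseteq \mathrm{IntAct}(T)$ bijects onto all spanning forests and preserves external activity), so your proof is complete modulo that standard citation, just as the paper's is.
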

In the recent paper~\cite{Ne} the second author found the following important property of these algebras.
\begin{theorem}[cf.~\cite{Ne}] Given two graphs $G_1$ and $G_2,$ the algebras $\C_{G_1}$ and $\C_{G_2}$ are isomorphic  if and only if the graphical matroids of $G_1$ and $G_2$ coincide. (The  isomorphism can be thought of as either graded or non-graded, the statement holds in both cases.) 
\end{theorem}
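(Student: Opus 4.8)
The common starting point for both directions is to rephrase $\C_G$ as a subalgebra of $\Phi_G$ generated \emph{in degree one}. Put $W_G:=\Span_{\bbK}\{X_i:i\in V(G)\}\subseteq\bigoplus_{e\in E(G)}\bbK\phi_e$; this is exactly the row space of the signed incidence matrix $(c_{i,e})$, i.e. the cut space of $G$ viewed as a subspace of $\bbK^{E(G)}$. By construction $\C_G=\langle W_G\rangle$. Furthermore, a subset $S\subseteq E(G)$ is independent in the graphical matroid $M(G)$ iff the coordinate projection $W_G\to\bbK^{S}$ is surjective iff the corresponding columns of $(c_{i,e})$ are independent; hence the matroid of the vector configuration $W_G\subseteq\bbK^{E(G)}$ is precisely $M(G)$.

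\emph{``If''.} Fix a matroid isomorphism, i.e. a bijection $\sigma\colon E(G_1)\to E(G_2)$ carrying $M(G_1)$ onto $M(G_2)$. Graphical matroids are regular, hence uniquely representable over $\bbK$: any two $\bbK$-representations of a fixed matroid differ by invertible row operations (which do not change the row space) and a rescaling of the coordinates. Applied to the representations $W_{G_1}$ and $\sigma^{-1}(W_{G_2})$ of $M(G_1)$ inside $\bbK^{E(G_1)}$, this gives an invertible diagonal map $D$ with $\sigma^{-1}(W_{G_2})=D(W_{G_1})$, so the monomial map $\Psi:=\sigma\circ D\colon\bbK^{E(G_1)}\to\bbK^{E(G_2)}$ satisfies $\Psi(W_{G_1})=W_{G_2}$. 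Since $\Psi$ merely rescales and relabels coordinates it extends to a graded algebra isomorphism $\Phi_{G_1}\to\Phi_{G_2}$ (it sends $\phi_e^2=0$ to a multiple of $\phi_{\sigma(e)}^2=0$), and since it takes $W_{G_1}$ to $W_{G_2}$ it takes $\C_{G_1}=\langle W_{G_1}\rangle$ to $\C_{G_2}=\langle W_{G_2}\rangle$. This yields the isomorphism in the graded and the ungraded sense at once.

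\emph{``Only if''.} First reduce to the graded case: $\C_G$ is standard graded, hence local with maximal ideal $\mathfrak m=\C_G^{(\ge 1)}$ and $\mathfrak m^{k}=\C_G^{(\ge k)}$, so $\mathrm{gr}_{\mathfrak m}\C_G=\C_G$ canonically and any ring isomorphism $\C_{G_1}\cong\C_{G_2}$, being local, induces a graded one; one may also take $G$ connected, since $\C_G\cong\bigotimes_i\C_{G_i}$ over the connected components. Write $\C_G=\mathrm{Sym}(\C_G^{(1)})/J$ with $J$ the intrinsic defining ideal; by Theorem~\ref{thm:PS} this is $\bbK[x_1,\dots,x_n]/J_G$ once the relation $x_1+\dots+x_n=0$ is used to identify $\C_G^{(1)}$ with $\bbK^{n}/\langle(1,\dots,1)\rangle$. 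The crux is a \emph{Key Lemma}: $J_G$ is minimally generated by the powers of linear forms $\ell_B^{\,d_B+1}$, one for each bond (minimal cut) $B$ of $G$ — if $B$ is the set of edges between $I$ and $V(G)\setminus I$ we put $\ell_B=\sum_{i\in I}x_i$ and $d_B=|B|$ — so that the minimal generators of $J$ are in natural bijection with the cocircuits of $M(G)$. (That $\bigl(\sum_{i\in I}x_i\bigr)^{d_I+1}$ lies in the bond-generated ideal whenever $I$ does not define a bond follows by pigeonhole from the decomposition of a cut into edge-disjoint bonds; the irredundancy of the bond generators is the part that needs work.) Granting this, the abstract algebra $\C_G$ determines, up to $\mathrm{GL}(\C_G^{(1)})$, the weighted line configuration $\{(\bbK\ell_B,d_B)\}$ in $\C_G^{(1)}$; under the identification $\C_G^{(1)}\cong W_G\subseteq\bbK^{E(G)}$ of Theorem~\ref{thm:PS} each $\ell_B$ becomes the cut vector $\sum_{i\in I}X_i$, which is supported exactly on the cocircuit $B$. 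Thus what one recovers abstractly are the cocircuit vectors of the representation $W_G$ of $M(G)$, and it remains to argue that this weighted configuration, up to $\mathrm{GL}$, recovers $M(G)$: for instance the cut at a single vertex is always a bond, the vectors $X_i$ form a distinguished spanning circuit with $\sum_i X_i=0$, and from the weights $d_B=|B|$ together with the linear dependencies among the $\ell_B$ one reconstructs the cut function $I\mapsto d_I$ of $G$ up to relabeling of vertices; the cut function determines $G$ up to Whitney $2$-isomorphism and isolated vertices, hence determines $M(G)$.

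The genuinely hard points are the Key Lemma — especially the irredundancy of the cocircuit generators of $J_G$ — and the closing combinatorial step showing that the $\mathrm{GL}$-orbit of the weighted cocircuit configuration pins down $M(G)$ itself rather than only a weaker invariant such as the Hilbert series (equivalently, the Tutte specialization of the Corollary). The ``if'' direction, by contrast, is essentially immediate once unique representability of regular matroids is invoked.
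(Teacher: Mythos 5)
Note first that this paper does not prove the quoted theorem at all: it is imported from the reference \cite{Ne}, so there is no in-paper argument to compare with; the proof in \cite{Ne} recovers the matroid from intrinsic data attached to degree-one elements (essentially their nilpotency orders, i.e.\ support sizes of vectors in the cut space), not from a minimal-generator analysis of the defining ideal. Your ``if'' direction is fine: unique representability of regular matroids over $\bbK$ does give a monomial map matching the two cut spaces, hence an isomorphism $\C_{G_1}\cong\C_{G_2}$, and the reduction of the non-graded case to the graded one via $\mathrm{gr}_{\mathfrak m}$ is also sound since $\C_G$ is generated in degree one and its positive part is the nilradical.

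The ``only if'' direction, however, has a genuine gap beyond the two points you flag yourself. Even if one grants your Key Lemma (that $J_G$ is minimally generated by the bond powers $\ell_B^{\,d_B+1}$), it does \emph{not} follow that ``the abstract algebra determines, up to $\mathrm{GL}(\C_G^{(1)})$, the weighted line configuration $\{(\bbK\ell_B,d_B)\}$.'' Minimal generating sets of an ideal are only well defined modulo $\mathfrak m J$: in any degree containing two or more bond generators one may replace them by generic linear combinations such as $\ell_{B_1}^{d+1}+\ell_{B_2}^{d+1}$, which are not powers of linear forms, so the lines $\bbK\ell_B$ are not readable from ``a'' minimal generating set. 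To make this step honest you need a canonical, isomorphism-invariant construction --- e.g.\ the function assigning to each $x\in\C_G^{(1)}$ its nilpotency order (equivalently, for each linear form $\ell$ the least $k$ with $\ell^k\in J$), from which one can locate the special linear forms and their multiplicities; this is in effect what the proof in \cite{Ne} does. Finally, even after the weighted cocircuit lines are recovered, the passage from the $\mathrm{GL}$-orbit of that weighted configuration (with its linear dependencies) back to the graphical matroid itself is asserted rather than proved; since the ground set $E(G)$ is not intrinsically visible in $\C_G^{(1)}$, this reconstruction is precisely the hard content of the theorem and cannot be dispatched by appealing to the cut function and Whitney $2$-isomorphism without an argument. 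As it stands, your text is a plausible plan whose two critical steps (canonical recovery of the line configuration, and recovery of $M(G)$ from it) are missing.
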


Furthermore, the paper~\cite{NS} contains a "K-theoretic" filtered structure of these algebras, which  distinguishes graphs (see definition inside there).

\bigskip

The main object of study of the present paper  is a family of $Q$-deformations of  $\C_G$ which we define as follows. For a graph $G$  and a set  of parameters $Q=\{q_e\in \bbK: \ e\in E(G)\}$, define 
 $\Phi_{G,Q}$ as the commutative algebra generated by the variables $\{u_e : \ e\in E(G)\}$ satisfying 
 $$u_e^2=q_eu_e,\; \text{for every edge\; } e\in G.$$

Let $V(G)=[n]$ be the vertex set  of a graph $G.$ Define the \emph{$Q$-deformation} $\Psi_{G,Q}$ of $\C_G$ as the filtered subalgebra of $\Phi_{G,Q}$ generated by the elements:
$$X_i=\sum_{e: \ i\in e} c_{i,e}  u_e, \ i\in[n],$$
where $c_{i,e}$ are the same as in~\eqref{eq:def_cie}. The filtered structure on $\Psi_{G,Q}$ is induced by the elements $X_i,\; i\in[n]$. More concrete, the filtered structure is an increasing sequence   $$\bbK= F_0\subset F_1\subset F_2\ldots \subset F_m=\Psi_{G,Q}$$ of subspaces of $\Psi_{G,Q}$, where $F_{k}$ is the linear span of all monomials $X_1^{\alpha_1}X_2^{\alpha_2}\cdots X_n^{\alpha_n}$ such that $\alpha_1+\ldots+\alpha_n\leq k$. Note that algebra $\Phi_{G,Q}$ has a finite dimension, then $\Psi_{G,Q}$ has a finite dimension, which gives that the increasing sequence of subspaces is finite.  
The Hilbert polynomial of a filtered algebra is the Hilbert polynomial of the associated graded algebra, it has the following formula
 $$\HH(t)=1+\sum_{i=1} (dim(F_i)-dim(F_{i-1}))t^{In}.$$
 
\smallskip
 In case when all parameters coincide, i.e., $q_e=q$, $\forall e\in G,$  we denote the corresponding algebras by  $\Psi_{G,q}$ and $\Phi_{G,q}$ resp. We refer to $\Psi_{G,q}$ as the {\it Hecke deformation} of $\C_G$.

 \begin{remark}{\rm (i)}
By definition, the algebra $\Psi_{G,0}$ coincides with  $\C_G$. 

{\rm (ii)}
If we change the signs of $q_e,\ e\in E'$ for some subset $E'\subseteq E$ of edges, we obtain an isomorphic algebra. 

{\rm(iii)} It is possible to write relations such as $u_e^2=\beta_e$ or $u_e^2=q_e u_e+\beta_e$  where ${\beta_e\in \bbK}$. 
But in the case of algebras counting spanning trees we need relations without constant terms, see \S~\ref{final}.
\end{remark}

\begin{example}
\emph{(i)} Let $G$ be a graph with two vertices, a pair of (multiple) edges $a$, $b$. Consider the Hecke deformation of its $\C_G$, i.e., satisfying $q_{a}=q_{b}=q$.

\smallskip 
The generators are $X_1=a+b,\; X_2=-(a+b)=-X_1.$ One can easily check that the filtered structure is given by
\begin{itemize}
\item $F_0=\bbK={<}1{>}$;
\item $F_1={<}1,\ a+b{>}$;
\item $F_2={<}1,\ a+b,\ ab{>}$.
\end{itemize}
The Hilbert polynomial $\HH  (t)$ of $\Psi_{G,q}$ is given by 
$$\HH  (t)=1+t+t^2.$$
 The defining relation for $X_1$  is given by
 $$X_1(X_1-q)(X_1-2q)=0.$$

\noindent
\emph{(ii)} For the same graph as before, consider the case when $Q=\{q_a,q_b\},\;q_{a}^2\neq q_{b}^2$.

The generators are the same: $X_1=a+b$, $X_2=-(a+b)=-X_1$. 
Since 
\begin{multline*}X_1^3=q_a^2a+q_b^2b+3(q_a+q_b)ab=\frac{3(q_a+q_b)}{2}X_1^2-\frac{q_a^2+3q_b^2}{2}a-\frac{3q_a^2+q_b^2}{2}b\\
=\frac{3(q_a+q_b)}{2}X_1^2-\frac{3q_a^2+q_b^2}{2}X_1+(q_a^2-q_b^2)a,
\end{multline*}
\begin{itemize}
\item $F_0=\bbK={<}1{>}$;
\item $F_1={<}1,\ a+b{>}$;
\item $F_2={<}1,\ a+b,\ q_aa+q_bb+2ab{>}$;
\item $F_3={<}1,\ a,\ b,\ ab{>}$.
  \end{itemize}The Hilbert polynomial $\HH  (t)$ of $\Psi_{G,Q}$ is given by 
$$\HH  (t)=1+t+t^2+t^3.$$ 
Observe that in this case the algebra $\Psi_{G,Q}$ coincides with  the whole $\Phi_{G,Q}$ as a linear space, but has a different filtration.
 The defining relation for $X_1$  is given by
 $$X_1(X_1-q_a)(X_1-q_b)(X_1-q_a-q_b)=0.$$
\end{example}

\medskip

The first result of the present paper is about Hecke deformations. 
\begin{theorem}
\label{thm:same}
For any loopless graph $G$, filtrations of  its Hecke deformation $\Psi_{G,q}$  induced by $X_i$ and induced by the algebra $\Phi_{G,q}$  coincide. Furthermore, the Hilbert polynomial $\HH  _{\Psi_{G,q}}(t)$ of this filtration is given by 
$$\HH  _{\Psi_{G,q}}(t)=T_G\left(1+t,\frac{1}{t}\right)\cdot t^{e(G)-v(G)+c(G)},$$
i.e., it coincides with that of $\C_G$.
\end{theorem}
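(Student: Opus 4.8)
The plan is to first determine $\dim_{\bbK}\Psi_{G,q}$ and then to extract both the coincidence of filtrations and the Hilbert polynomial from the associated graded of one of them; write $\F(G)$ for the number of spanning forests of $G$. First I would filter $\Phi_{G,q}$ by letting $\Phi_{G,q}^{\le k}$ be the $\bbK$-span of all products of at most $k$ of the generators $u_e$; since $u_e^2=q\,u_e$, this is a multiplicative filtration whose associated graded $\operatorname{gr}\Phi_{G,q}$ is canonically the algebra $\Phi_G$ (the one with $\phi_e^2=0$). Intersecting with $\Psi_{G,q}$ produces $\widetilde F_k:=\Psi_{G,q}\cap\Phi_{G,q}^{\le k}$, which is the filtration "induced by $\Phi_{G,q}$" of the statement, and since each $X_i$ lies in $\Phi_{G,q}^{\le1}$ one has $F_k\subseteq\widetilde F_k$ for all $k$. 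Now $\operatorname{gr}_{\widetilde F}\Psi_{G,q}$ is a graded subalgebra of $\operatorname{gr}\Phi_{G,q}=\Phi_G$ containing the symbols of $X_1,\dots,X_n$, which are precisely the generators $\sum_e c_{i,e}\phi_e$ of $\C_G$; hence $\C_G\subseteq\operatorname{gr}_{\widetilde F}\Psi_{G,q}$, so by Theorem~\ref{thm:PS}, $\dim_{\bbK}\Psi_{G,q}\ge\dim_{\bbK}\C_G=\F(G)$.

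The reverse inequality $\dim_{\bbK}\Psi_{G,q}\le\F(G)$ is the crux. I would prove it by induction on $e(G)$, the edgeless case being trivial. Fix $e=(v,w)\in E(G)$: killing $u_e$ is an algebra surjection $\Phi_{G,q}\twoheadrightarrow\Phi_{G\setminus e,q}$ carrying the $X_i$ of $G$ onto those of $G\setminus e$, so it restricts to a surjection $\Psi_{G,q}\twoheadrightarrow\Psi_{G\setminus e,q}$ with kernel the ideal $\Psi_{G,q}\cap u_e\Phi_{G,q}$, whence
\[
\dim_{\bbK}\Psi_{G,q}=\dim_{\bbK}\Psi_{G\setminus e,q}+\dim_{\bbK}\!\bigl(\Psi_{G,q}\cap u_e\Phi_{G,q}\bigr).
\]
Since $\F(G)=\F(G\setminus e)+\F(G/e)$, the induction closes once one shows $\dim_{\bbK}(\Psi_{G,q}\cap u_e\Phi_{G,q})\le\dim_{\bbK}\Psi_{G/e,q}$, and this is where the real work lies. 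Note $\Psi_{G,q}\cap u_e\Phi_{G,q}=\{Q(X_1,\dots,X_n):Q(X_1^{G\setminus e},\dots,X_n^{G\setminus e})=0\}$; multiplying by the idempotent $q^{-1}u_e$ (a purely graded argument for $q=0$) rewrites $u_eX_v$, $u_eX_w$, $u_eX_u$ ($u\neq v,w$) as $X_v$ and $X_w$ shifted by the constants $\pm c_{v,e}q$ and as $X_u$, so that, after absorbing constants, this ideal is controlled by the generators $X_u$ ($u\neq v,w$) together with $X_v+X_w$ — that is, by $\Psi_{G/e,q}$ (the per-edge sign changes produced by contracting $e$ being harmless by the remark that flipping signs of the $q_e$ yields an isomorphic algebra). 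An equivalent viewpoint when $q\neq 0$: the assignment $u_f\mapsto\bigl(S\mapsto q\,[f\in S]\bigr)$, $f\in E(G)$, identifies $\Phi_{G,q}$ with the algebra of functions on subsets of $E(G)$ and $\Psi_{G,q}$ with the subalgebra of functions factoring through the net-degree score vector $S\mapsto\bigl(\sum_{f\ni i,\,f\in S}c_{i,f}\bigr)_i$, whence $\dim_{\bbK}\Psi_{G,q}$ equals the number of distinct score vectors of subgraphs of $G$, and one must then prove combinatorially that this number is $\F(G)$. I expect this dimension count — in either guise — to be the only genuinely hard point; at $q=0$, where $\Psi_{G,0}=\C_G$, it also furnishes the new proof of $\dim_{\bbK}\C_G=\F(G)$ announced in the abstract.

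Granting the above, $\dim_{\bbK}\Psi_{G,q}=\F(G)=\dim_{\bbK}\C_G$, so the graded subalgebra $\operatorname{gr}_{\widetilde F}\Psi_{G,q}\subseteq\Phi_G$ contains $\C_G$ and has the same dimension, hence equals $\C_G$. In particular $\operatorname{gr}_{\widetilde F}\Psi_{G,q}$ is generated in degree $1$ by the symbols of $X_1,\dots,X_n$, so its degree-$k$ piece $\widetilde F_k/\widetilde F_{k-1}$ is spanned by the images of the monomials $X_1^{\alpha_1}\cdots X_n^{\alpha_n}$ with $\alpha_1+\cdots+\alpha_n=k$; thus $\widetilde F_k=\widetilde F_{k-1}+\sum_{\alpha_1+\cdots+\alpha_n=k}\bbK\,X_1^{\alpha_1}\cdots X_n^{\alpha_n}$. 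As $\widetilde F_0=\bbK=F_0$ and every such monomial of total degree at most $k$ lies in $F_k$, induction on $k$ together with $F_k\subseteq\widetilde F_k$ gives $F_k=\widetilde F_k$ for all $k$: the two filtrations coincide. The Hilbert polynomial of this common filtration is that of its associated graded $\operatorname{gr}_{\widetilde F}\Psi_{G,q}=\C_G$, i.e.\ $\HH_{\C_G}(t)$, which by the Corollary above equals $T_G\!\left(1+t,\tfrac1t\right)\cdot t^{e(G)-v(G)+c(G)}$ — exactly as claimed, and in particular it coincides with the Hilbert polynomial of $\C_G$.
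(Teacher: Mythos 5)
Your formal skeleton is sound: the filtration $\widetilde F_k=\Psi_{G,q}\cap\Phi_{G,q}^{\le k}$, the identification $\operatorname{gr}\Phi_{G,q}\cong\Phi_G$, the inclusion $\C_G\subseteq\operatorname{gr}_{\widetilde F}\Psi_{G,q}$, and the closing argument (equal dimensions force $\operatorname{gr}_{\widetilde F}\Psi_{G,q}=\C_G$, hence generation in degree one, hence $F_k=\widetilde F_k$ and the Tutte formula) are all correct. But everything hinges on the upper bound $\dim_{\bbK}\Psi_{G,q}\le\F(G)$, which you explicitly leave open, and neither of your two routes closes it. In the deletion--contraction route the needed inequality $\dim(\Psi_{G,q}\cap u_e\Phi_{G,q})\le\dim\Psi_{G/e,q}$ is exactly the hard point, and the idempotent sketch does not deliver it: multiplying $Q(X_1,\dots,X_n)$ by $\epsilon=q^{-1}u_e$ gives $Q_\epsilon(\epsilon X_1,\dots,\epsilon X_n)$ in the corner algebra $\epsilon\Phi_{G,q}\cong\Phi_{G\setminus e,q}$, where $\epsilon X_v$ and $\epsilon X_w$ occur \emph{separately} (each is a generator of $\Psi_{G\setminus e,q}$ shifted by a constant), so what comes for free is containment in a copy of $\Psi_{G\setminus e,q}$, not of $\Psi_{G/e,q}$; the assertion that only $X_v+X_w$ and the $X_u$, $u\ne v,w$, suffice --- i.e.\ that vanishing under $u_e\mapsto 0$ forces the element to descend to the contraction --- is precisely what must be proved and is not. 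In the score-vector route the bound $\dim\Psi_{G,q}\le\#\{\text{score vectors}\}$ is indeed easy, but you then still need the combinatorial identity that the number of distinct score vectors equals $\F(G)$, which you only say ``one must then prove''; that is the Kleitman--Winston theorem (Proposition~\ref{cor:forests=orient}, which the paper obtains as a \emph{consequence} of Theorems~\ref{thm:same} and~\ref{thm:different}), so you would have to prove it or invoke \cite{KW} independently. As it stands, the crux of the theorem is missing.

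For comparison, the paper needs no dimension count at all: it argues directly from the known presentation of $\C_G$ (Theorem~\ref{thm:PS}). If $y=f(X_1,\dots,X_n)$ with $\deg f=d$ has smaller degree in $\Phi_{G,q}$, then the top part $f_d$ annihilates the generators of $\C_G$ and hence lies in the ideal generated by the powers $(\sum_{i\in I}x_i)^{d_I+1}$; in the Hecke case $\sum_{i\in I}X_i$ is a signed sum of the $d_I$ cut-edge variables, each satisfying $u_e^2=qu_e$, so $(\sum_{i\in I}X_i)^{d_I+1}$ rewrites as a strictly lower-degree polynomial in the same $X_i$, and the resulting descent proves the filtrations coincide, after which the Hilbert polynomial is inherited from $\C_G$. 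Your plan instead amounts to deducing Theorem~\ref{thm:same} from Theorem~\ref{thm:different} together with Kleitman--Winston; that is a legitimate alternative only once the missing dimension bound is actually supplied.
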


\medskip
The latter result implies that cases when not all   $q_e$ are equal are  more interesting than the case of the Hecke deformation. 
We will work with weighted graphs, i.e. when each edge $e$ has non-zero $q_e\in \bbK$, and will simply denote the algebra  for a weighted graph $G$ by $\Psi_G$.

\begin{definition}For a loopless weighted graph $G$ on $n$ vertices and an orientation $\vec{G},$ define the score vector $D_{\vec{G}}^+\in\bbK^n$ as follows
$$\Bigg(\sum_{\substack{e\in E:\\ end(\vec{e})=1}}q_e, \sum_{\substack{e\in E:\\ end(\vec{e})=2}}q_e,\ldots, \sum_{\substack{e\in E:\\ end(\vec{e})=n}}q_e \Bigg),$$
where $end(\vec{e})$ is the final vertex of oriented edge $\vec{e}$.
\end{definition}
\begin{theorem}
\label{thm:different}
 For any loopless weighted graph $G$, the dimension of the algebra $\Psi_G$ is equal to the number of distinct score vectors, i.e.
$$dim(\Psi_G)=\#\{D\in\bbK^n:\ \exists \vec{G} \text{\ such that }D=D_{\vec{G}}^+ \}.$$
\end{theorem}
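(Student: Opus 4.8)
The plan is to establish both a lower bound and an upper bound on $\dim(\Psi_G)$ in terms of the number $N$ of distinct score vectors, building on the technology that produced Theorem~\ref{thm:PS}. First I would fix a natural spanning set for $\Psi_G$ inside $\Phi_{G}$: since each $u_e$ satisfies $u_e^2=q_eu_e$, every element of $\Phi_G$ is a $\bbK$-linear combination of the square-free monomials $u_F=\prod_{e\in F}u_e$ over subsets $F\subseteq E(G)$, and $\dim\Phi_G=2^{e(G)}$. Expanding a product $X_1^{\alpha_1}\cdots X_n^{\alpha_n}$ in this basis and tracking which $u_F$ appear with nonzero coefficient, I would argue — exactly as in the $q_e=0$ case where $\Psi_G$ degenerates to $\C_G$ with spanning-forest basis — that $\Psi_G$ is spanned by elements whose leading term (with respect to a suitable ordering refining inclusion of edge sets, with $u_E$ highest) is $u_F$ for $F$ ranging over spanning forests. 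The new ingredient is that the coefficients are now polynomials in the $q_e$'s rather than constants, and the associated graded of the $X_i$-filtration is governed by the polynomials $p_I^{(0)}=(\sum_{i\in I}x_i)^{d_I}$ of Theorem~\ref{thm:PS}; so a priori $\dim(\Psi_G)\le\dim\C_G=\F_G$, the number of spanning forests.

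Next I would set up the bijective bookkeeping that links spanning forests to score vectors. Given an orientation $\vec G$, the score vector $D^+_{\vec G}$ records, coordinate by coordinate, the $q_e$-weighted in-degree. The key combinatorial fact — a weighted analogue of the classical statement that score/parking-function data are counted by forests — is that the map sending an orientation to its weighted in-degree vector, when restricted appropriately, has fibers in bijection with something of size $e(G)-v(G)+c(G)$ worth of cycles, and the \emph{distinct} values realized are counted by spanning forests when the $q_e$ are generic, but in general by fewer. Concretely I would show: (a) two orientations give the same score vector iff they differ by reversing a set of edge-disjoint cycles \emph{on which the alternating sum of $q_e$ vanishes}; (b) the number of distinct score vectors equals the number of "allowed" spanning forests, i.e. those $F$ such that the monomial $u_F$ survives in $\Psi_G$. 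This is where the hypothesis that all $q_e\ne0$ is used.

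For the matching lower bound I would exhibit, for each distinct score vector $D$, an explicit element of $\Psi_G$ detecting it — e.g. evaluate a suitable polynomial in the $X_i$ against a linear functional on $\Phi_G$ built from the orientation realizing $D$ — and show these elements are linearly independent because their "score-vector leading data" are distinct. Combined with the upper bound, $\dim(\Psi_G)=N$. The cleanest route to the upper bound is to produce, for every spanning forest $F$ whose $u_F$ does \emph{not} survive, an explicit relation in $\Psi_G$ killing it, i.e. to show the associated graded is a quotient of $\bbK[x_1,\dots,x_n]/J^0_G$ by the extra relations forced by vanishing alternating cycle-sums; a dimension count then forces equality.

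The main obstacle I anticipate is part (a)–(b) above: controlling precisely which square-free monomials $u_F$ survive in $\Psi_G$ when the $q_e$ satisfy nongeneric linear relations. In the generic case one recovers all spanning forests and the count is $\F_G$, but pinning down the general answer requires understanding the kernel of the "weighted boundary" map $\bbK^{E}\to\bbK^{V}$ twisted by $Q$ and showing that its interaction with the square-free algebra structure is \emph{exactly} captured by the score-vector equivalence — neither more nor fewer monomials collapse. I expect the orientation/cycle-reversal description of score-vector equivalence classes, together with a careful leading-term analysis of how $X_i^{\alpha_i}$ products expand in the $u_F$ basis (the binomial-type coefficients now carrying $q_e$ factors), to be the technical heart of the argument.
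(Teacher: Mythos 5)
There is a genuine gap, and it is fatal to the strategy rather than a fixable detail. Your claimed a priori upper bound ``$\dim(\Psi_G)\le\dim\C_G=$ number of spanning forests'' is false, and so is the accompanying claim that for generic $q_e$ the distinct score vectors are counted by spanning forests (``in general by fewer''). You have the picture backwards: it is the \emph{Hecke} case $q_e\equiv q$ whose dimension is the forest number (Theorem~\ref{thm:same} and Proposition~\ref{cor:forests=orient}), while for generic nonzero $Q$ \emph{all} $2^{e(G)}$ orientations have distinct weighted score vectors and $\dim\Psi_G=2^{e(G)}$, which exceeds the forest number whenever $G$ has a cycle. The paper's own Example (ii) already refutes your bound: for two vertices joined by edges $a,b$ with $q_a^2\ne q_b^2$ one gets $\dim\Psi_{G,Q}=4$ while the number of spanning forests is $3$ (the minimal polynomial of $X_1$ jumps from degree $3$ to degree $4$). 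This also explains why your leading-term/Gr\"obner heuristic cannot work as stated: the relations $p_I$ of $\C_G$ do not deform to relations of $\Psi_{G,Q}$, i.e.\ the associated graded of the $X_i$-filtration is \emph{not} a quotient of $\B_G$ in general, so ``which square-free monomials $u_F$ survive'' is the wrong question — more monomial classes survive than forests, not fewer. Your characterization (a) of score-vector equivalence is also off: reversing a directed cycle changes the weighted in-degree at each vertex of the cycle by the difference of the two incident cycle-edge weights, so the condition is a local balance at every vertex, not the vanishing of an alternating sum around the cycle.

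The paper's argument avoids all of this by exploiting that all $q_e\ne 0$ makes $\Phi_{G,Q}$ semisimple. Writing elements in the basis $\alpha_{E'}=\prod_{e\in E'}u_e/q_e$ and passing to the zeta-transform coordinates $\widetilde{z}_{E'}=\sum_{E''\subseteq E'}z_{E''}$, multiplication becomes the Hadamard product (Lemma~\ref{lem:basis}), i.e.\ $\Phi_{G,Q}\cong\bbK^{2^{e(G)}}$ with coordinates indexed by orientations. Lemma~\ref{lem:Xi} identifies the coordinates of $X_i$ with the $i$-th entries of the score vectors, up to a global additive constant. The upper bound is then immediate (orientations with equal score vectors have equal coordinates for every element of $\Psi_G$), and the lower bound follows by taking a generic linear combination $R=r_0+r_1A_1+\cdots+r_nA_n$ and applying Lemma~\ref{lem:dim}, which computes the dimension of the subalgebra generated by a single element as the number of distinct coordinates of $\widetilde R$ via its minimal polynomial. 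If you want to salvage your approach, you would essentially have to rediscover this diagonalization; the forest-based upper bound you propose cannot be repaired.
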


\smallskip

As a consequence of Theorems~\ref{thm:same} and~\ref{thm:different}, we obtain the following known property.  (See bijective proofs in~\cite{KW} and~\cite{Be}.)
\begin{proposition}\label{cor:forests=orient}
For any graph $G$, the number of its spanning forests is equal to the number of distinct vectors of incoming degrees corresponding to its orientations.
\end{proposition}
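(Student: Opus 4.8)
The plan is to apply Theorems~\ref{thm:same} and~\ref{thm:different} to a single well-chosen weighting of $G$: take $q_e = 1$ for every edge $e \in E(G)$ (any fixed nonzero scalar works just as well). With this choice the weighted graph $G$ is precisely the Hecke deformation at $q = 1$, so the hypotheses of \emph{both} theorems are met by the \emph{same} algebra $\Psi_{G,1}$, and it suffices to compute $\dim_{\bbK}\Psi_{G,1}$ in two ways.

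First I would evaluate the dimension via Theorem~\ref{thm:same}: it gives $\HH_{\Psi_{G,1}}(t) = T_G(1+t,\tfrac{1}{t})\cdot t^{e(G)-v(G)+c(G)}$, the same Hilbert polynomial as $\C_G$, so setting $t = 1$ yields $\dim_{\bbK}\Psi_{G,1} = T_G(2,1)$, which is the number of spanning forests of $G$ (this is also the total dimension of $\C_G=\B_G$ recorded in Theorem~\ref{thm:PS}).

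Next I would evaluate the same dimension via Theorem~\ref{thm:different}: it gives $\dim_{\bbK}\Psi_{G,1} = \#\{D\in\bbK^n : D = D_{\vec G}^+ \text{ for some orientation } \vec G\}$. Since every $q_e$ equals $1$, the definition of the score vector shows that $D_{\vec G}^+$ is exactly the vector whose $i$-th coordinate is the number of edges of $\vec G$ directed into vertex $i$, i.e.\ the vector of incoming degrees of $\vec G$. Hence two orientations produce the same score vector if and only if they produce the same incoming-degree vector, and the number of distinct score vectors equals the number of distinct incoming-degree vectors over all orientations of $G$.

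Comparing the two computations of $\dim_{\bbK}\Psi_{G,1}$ gives the desired equality. I expect no real obstacle here: the only thing to verify is that the constant weighting is legitimate input for both theorems — it has all $q_e\neq 0$ (needed for Theorem~\ref{thm:different}) and it is the Hecke case (needed for Theorem~\ref{thm:same}) — after which the identification of score vectors with incoming-degree vectors is immediate. One could equally run the argument with a generic common value $q$, where $D_{\vec G}^+ = q\cdot(\text{incoming-degree vector})$, reaching the same conclusion.
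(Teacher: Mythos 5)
Your argument is exactly the paper's intended derivation: the proposition is stated there as a direct consequence of Theorems~\ref{thm:same} and~\ref{thm:different}, obtained by specializing to a constant nonzero weight so that the dimension of the single algebra $\Psi_{G,q}$ is counted once by spanning forests ($T_G(2,1)$) and once by distinct score vectors, which for constant $q$ are just (multiples of) the incoming-degree vectors. The proposal is correct and adds only the harmless observation that characteristic zero makes the identification of score vectors with incoming-degree vectors immediate.
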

Our proof of Theorem~\ref{thm:different} is very simple and it gives a new proof about total dimension of an original algebra.
Unlucky, our proof works only for weighted graphs (nonzeroes parameters). A zero parameter does not play role in score vectors, so we even do not have a conjecture.
\begin{problem}
What is the dimension of $\Psi_{G,Q}$ in case when some of $q_e$ are non-zeroes and few  are zeroes?
\end{problem}

\bigskip 

The structure of the paper is as follows. 
 In \S~\ref{sec:hecke} we prove Theorem~\ref{thm:same} and discuss Hecke deformations. In \S~\ref{basis} we describe the basis  of $Q$-deformations and  present a  proof of Theorem~\ref{thm:different}. In \S~\ref{experiments} we consider "generic" cases and provide  examples of Hilbert polynomials. In \S~\ref{final} we present 
 $Q$-deformations of the Postnikov-Shapiro algebra which counts spanning trees and of the internal algebra, we also present ``square free'' definition of the internal algebra.

\bigskip

\section{Hecke deformations}\label{sec:hecke}
\begin{proof}[Proof of Theorem~\ref{thm:same}]
To settle  this theorem, we need to show that if an element $y\in\Psi_{G,Q}$ has degree $d,$ then it has the same degree in $\Phi_{G,Q}$.

Assume the opposite; then there exists an element $y=f(X_1,\ldots,X_n)$, where $f$  is a polynomial of degree $d$, but $y$ has degree less than $d$  in its representation in terms of the edges $u_e,$ $e\in G$.

Rewrite $f$ as $f=f_d+f_{<d}$, where $f_d$ is a homogeneous polynomial of degree $d$ and  $\deg f_{<d} <d$.

Let $\widehat{X}_1,\ldots,\widehat{X}_n$ be the elements in the algebra ${\mathcal{C}}_G=\Psi_{G,0}$ corresponding to the vertices. 
 We conclude that $f_d(\widehat{X}_1,\ldots,\widehat{X}_n)$ should vanish. Indeed, otherwise  $\deg f_d({X}_1,\ldots,X_n)=d$  in $\Phi_{G,Q}$ and $\deg f_{<d}({X}_1,\ldots,X_n)<d$ which implies that  $\deg f({X}_1,\ldots,X_n)=d$  in $\Phi_{G,Q}$.
 
By Theorem~\ref{thm:PS}, we know all  the relations between $\{\widehat{X}_1,\ldots,\widehat{X}_n\}.$ Namely, they are of the  form $(\sum_{i\in I}\widehat{X}_i)^{d_I+1}$, where $I$ is an arbitrary subset of vertices and $d_I$ is the number of edges between $I$ and its complement $V(G)\setminus I$.

Using this, we obtain
$$f_d(x_1,\ldots,x_n)=\sum_{\substack{I\subseteq V(G):\\ d_I\leq d-1}} r_{I}(x_1,\ldots,x_n)\cdot \left(\sum_{i\in I}x_i\right)^{d_I+1},$$
where $r_I$ is a homogeneous polynomial of degree $d-d_I-1$.
However, it is possible to rewrite   $\left(\sum_{i\in I}X_i\right)^{d_I+1}$ as an element of a smaller degree in terms of $\{X_i,\ i\in I\}$. Hence, there is polynomial $g$ of degree less than $d$ such that $y=g(X_1,\ldots,X_n)$.

\smallskip

The second part follows from the first one. It is enough to consider graded lexicographic orders of monomials in $\{u_e,\ e\in G\}$ and $\{\phi_e,\ e\in G\}$. For these orders, we have a natural bijection between the Gr{\"o}bner bases of $\Psi_{G,q}$  and of ${\mathcal{C}}_G$. Hence, their Hilbert polynomials coincide.
\end{proof}

\medskip

Proposition~\ref{cor:forests=orient} shows that the dimension of a Hecke deformation is equal to the number of lattice points of the zonotope $Z\in \bR^n$, which is the Minkowski sum of edges, i.e,
$$Z_G:=\bigoplus_{e\in G}I_e,$$
where, for edge $e=(i,j)$, $I_e$ is the segment between points $(\underbrace{0,\dots, 0}_{i-1},1,0,\ldots,0)$  and $(\underbrace{0,\dots, 0}_{j-1},1,0,\ldots,0)$. Note that sum of all coordinates is $|E|$, i.e., corresponding zonotope belongs to subspace of dimension $n-1$, see example.
 {\begin{figure}[htb!]
\centering
\includegraphics[scale=0.9]{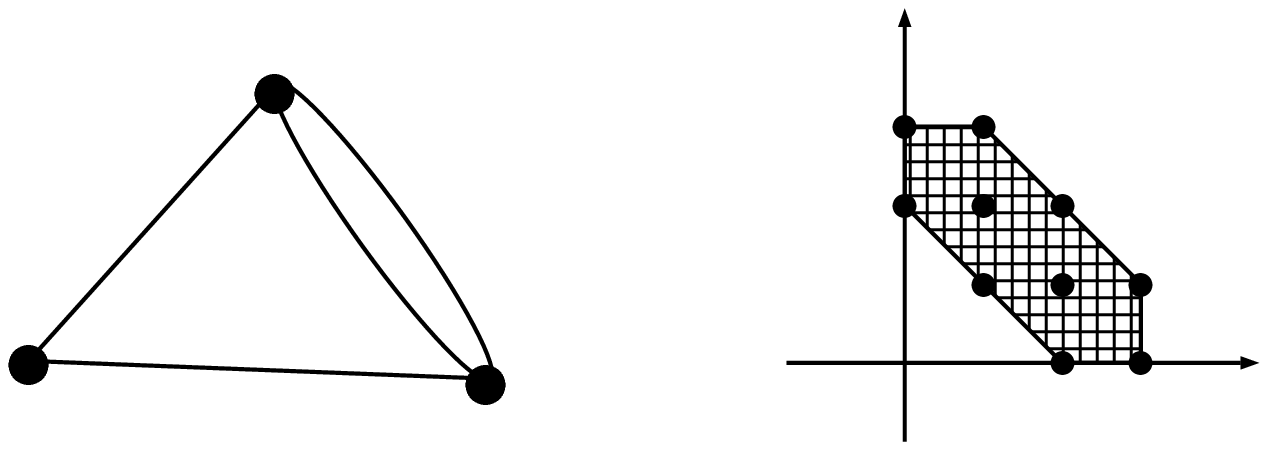}
\end{figure}
}
In~\cite{HR} Holtz and Ron defined the zonotopal algebra for any lattice zonote, whose dimension is equal to the number of lattice points. 
By their definition PS-algebra ${\B}_{G}$ is the zonotopal algebera corresponding to $Z_G$. We think that Hecke deformations should be extended on a case of zonotopal algebras.
\begin{problem}
Define Hecke deformations of zonotopal algebras.
\end{problem}

Since there is no definition of zonotopal algebras in terms of square-free algebras, we should work with quotient algebras. 
In case of Hecke deformations of PS-algebras Proposition~\ref{prop:ann} from  \S~\ref{basis} gives all defining relations between elements $X_i,\ i\in[n].$ 
\begin{theorem}
\label{thm:quo-Hecke}
Let $G$ be a graph and $q\in \bbK$ ($q_e=q,\ \forall e\in G$). Then all defining relations between $X_i,\ i\in [n]$ are given by
$$\prod_{k=-\vec{d}_I}^{\cev{d}_I}\left(\sum_{i\in I} X_i -qk\right)=0,$$
where $I$ is any subset of vertices and $\vec{d}_I$ (resp. $\cev{d}_I$) is the number of edges $e=(i,j)\in G:\ i\in I,\ j\notin I$ and $i>j$ (resp. $i<j$).  
\end{theorem}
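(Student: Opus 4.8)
The plan is to identify the kernel of the surjective algebra homomorphism $\pi\colon\bbK[x_1,\dots,x_n]\twoheadrightarrow\Psi_{G,q}$ defined by $x_i\mapsto X_i$ with the ideal $\widetilde{J}$ generated by the elements
$$g_I:=\prod_{k=-\vec{d}_I}^{\cev{d}_I}\Bigl(\sum_{i\in I}x_i-qk\Bigr),\qquad I\subseteq V(G).$$
Because $\pi$ is surjective and $\dim_{\bbK}\Psi_{G,q}$ is finite, it is enough to check two things: first, that every $g_I$ lies in $\ker\pi$, so that $\pi$ descends to a surjection $\bar\pi\colon A:=\bbK[x_1,\dots,x_n]/\widetilde{J}\twoheadrightarrow\Psi_{G,q}$; second, that $\dim_{\bbK}A\le\dim_{\bbK}\Psi_{G,q}$. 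Since Theorem~\ref{thm:same} gives $\dim_{\bbK}\Psi_{G,q}=\dim_{\bbK}\C_G$, these two facts force $\bar\pi$ to be an isomorphism, hence $\ker\pi=\widetilde{J}$, which is the assertion.

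For the first point I would simplify $Y_I:=\sum_{i\in I}X_i=\sum_{i\in I}\sum_{e:\,i\in e}c_{i,e}u_e$. An edge with both endpoints $i,j\in I$ contributes $c_{i,e}u_e+c_{j,e}u_e=0$, an edge with no endpoint in $I$ contributes $0$, and an edge $e=(i,j)$ with exactly one endpoint $i\in I$ contributes $+u_e$ if $i<j$ and $-u_e$ if $i>j$; thus $Y_I$ is a signed sum of $\vec{d}_I+\cev{d}_I$ distinct generators $u_e$, the $\cev{d}_I$ edges with $i<j$ entering with sign $+$ and the $\vec{d}_I$ edges with $i>j$ with sign $-$. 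If $q=0$ then $g_I=(\sum_{i\in I}x_i)^{d_I+1}$ and $\Psi_{G,0}=\C_G$, so this is part of Theorem~\ref{thm:PS}. If $q\neq0$, then $\bbK[u]/(u^2-qu)\cong\bbK\times\bbK$, so $\Phi_{G,q}\cong\bigotimes_{e\in E(G)}\bbK[u_e]/(u_e^2-qu_e)$ is a product of copies of $\bbK$ indexed by subsets $S\subseteq E(G)$, the copy $S$ being the quotient in which $u_e=q$ for $e\in S$ and $u_e=0$ otherwise. On the copy $S$ each $u_e$ equals $q$ or $0$, so $Y_I$ evaluates to $qk$ for some integer $k$ with $-\vec{d}_I\le k\le\cev{d}_I$; hence the factor $(\sum_{i\in I}X_i-qk)$ of $g_I(X_1,\dots,X_n)$ vanishes on that copy. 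Therefore $g_I(X_1,\dots,X_n)=0$ in $\Phi_{G,q}$, and a fortiori in $\Psi_{G,q}$.

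For the second point I would pass to the associated graded algebra $\operatorname{gr}A$ for the filtration of $A$ induced by the degree filtration of $\bbK[x_1,\dots,x_n]$. The product $g_I$ has exactly $\vec{d}_I+\cev{d}_I+1=d_I+1$ linear factors, so $\deg g_I=d_I+1$ and its top-degree homogeneous part is $(\sum_{i\in I}x_i)^{d_I+1}=p_I^{(1)}$. Because $g_I$ maps to $0$ in $A$, the class of $(\sum_{i\in I}x_i)^{d_I+1}$ is zero in the degree-$(d_I+1)$ component of $\operatorname{gr}A$, for every $I$; consequently $\operatorname{gr}A$ is a quotient of $\bbK[x_1,\dots,x_n]/J_{G}^{1}=\B_G$, which is isomorphic to $\C_G$ by Theorem~\ref{thm:PS}. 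Therefore $\dim_{\bbK}A=\dim_{\bbK}\operatorname{gr}A\le\dim_{\bbK}\C_G=\dim_{\bbK}\Psi_{G,q}$, which is exactly what was needed.

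I expect the only delicate point to be the degeneration argument in the second step: one must make sure that reducing each relation $g_I=0$ modulo its lower-order terms genuinely produces the classical Postnikov--Shapiro relation $p_I^{(1)}=0$ in $\operatorname{gr}A$, so that $\operatorname{gr}A$ really does degenerate onto a quotient of $\C_G$ and the a priori inequality $\dim A\ge\dim\Psi_{G,q}$ coming from surjectivity of $\bar\pi$ collapses to an equality. The first step is a routine computation once the product decomposition of $\Phi_{G,q}$ (for $q\neq0$) is set up, and the case $q=0$ adds nothing: there the statement is exactly the known description of the relations in $\C_G$.
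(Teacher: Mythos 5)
Your proof is correct and follows essentially the same route as the paper: the paper likewise verifies that the proposed relations hold in $\Psi_{G,q}$ (via Proposition~\ref{prop:ann}, whose tilde-coordinates are exactly your evaluation on the $2^{e(G)}$ one-dimensional factors of $\Phi_{G,q}$) and then sandwiches the dimension of the abstract quotient between $\dim\Psi_{G,q}$ and $\dim\B_G$ using Theorem~\ref{thm:same}. Your associated-graded step simply spells out the inequality the paper dismisses with ``it is clear that $\Dim(\B_G)\geq \Dim(\Psi_{G,q}')$''.
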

\begin{proof}
By proposition~\ref{prop:ann} there are relations between $X_i, i\in [n]$ of type
$$\prod_{k=0}^{D_I}\left(\sum_{i\in I} X_i -qk\right)=0.$$
Consider the commutative algebra $\Psi_{G,q}'$ generated by $x_i$ and with relations $$\prod_{k=0}^{D_I}\left(\sum_{i\in I} x_i -qk\right)=0,$$
for any subset $I\subset [n]$. 

We know that $$\Dim(\Psi_{G,q}')\geq \Dim(\Psi_{G,q}) = \Dim(\Psi_{G,0}) = \Dim(\B_G),$$
in other hand it is clear that $\Dim(\B_g)\geq \Dim(\Psi_{G,q}').$ We obtain $$\Dim(\Psi_{G,q}')=\Dim(\Psi_{G,q}),$$
hence, $\Psi_{G,q}$ and  $\Psi_{G,q}'$ are isomorphic.
\end{proof}

\bigskip

\section{Basis of  $Q$-deformations}\label{basis} For the next proofs, we need to describe a basis of the algebra $\Phi_G$. For a subset $E'$ of the edges, we define $$\alpha_{E'}=\prod_{e\in E'}\frac{u_e}{q_e}.$$
Since $q_e\neq 0$ this basis is well defined. For an element $z=\sum_{E'} z_{E'}\alpha_{E'}\in \Phi_G$, we define the vector $\widetilde{z}=[\widetilde{z}_{E'}]_{E'\subseteq E}\in \bbK^{2^{e(G)}},$ where
$$\widetilde{z}_{E'}=\sum_{E''\subseteq E'}z_{E''}.$$
It is clear that from this vector we can reconstruct $z$, also it is easy to describe the product on these coordinates. Furthermore unit element $I$ is given by $I:=\widetilde{1}=[1]_{E'\subseteq E}.$
\begin{lemma}\label{lem:basis} Elements corresponding to $[0,\ldots,0,1,0,\ldots,0]$ form a linear basis of~$\Phi_G$. This basis has the following property: let $y,z\in \Phi_G,$ be elements of the algebra, then the sum of elements is the sum by coordinates
$$\widetilde{(y+z)}=\widetilde{y}+\widetilde{z},$$
and the product is the Hadamard product of coordinates
$$\widetilde{(yz)}=\widetilde{y}\circ \widetilde{z}.$$
\end{lemma}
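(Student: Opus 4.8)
The plan is to work in the basis of $\Phi_G=\Phi_{G,Q}$ described just above, namely the monomials $\alpha_{E'}=\prod_{e\in E'}u_e/q_e$ indexed by subsets $E'\subseteq E(G)$, and to transport the algebra structure to the coordinates $\widetilde z_{E'}=\sum_{E''\subseteq E'}z_{E''}$. First I would verify that the $2^{e(G)}$ elements $\alpha_{E'}$ really do form a $\bbK$-basis of $\Phi_G$: the defining relations $u_e^2=q_eu_e$ let any word in the $u_e$ be rewritten as a $\bbK$-linear combination of square-free words, so the $\alpha_{E'}$ span, and a dimension count (or a direct triangularity argument) shows they are independent. Then I would observe that the assignment $z\mapsto\widetilde z$ is the linear change of coordinates given by the (invertible, unitriangular with respect to inclusion) zeta matrix $[\,[E''\subseteq E']\,]$, so it is a linear isomorphism $\Phi_G\to\bbK^{2^{e(G)}}$; in particular additivity $\widetilde{(y+z)}=\widetilde y+\widetilde z$ is immediate, and the claim that the basis elements $\alpha_{E'}$ themselves correspond to the standard coordinate vectors $[0,\dots,0,1,0,\dots,0]$ is exactly the statement that $\widetilde{\alpha_{E'}}$ has $E''$-entry equal to $[E'\subseteq E'']$, hmm — more precisely one should pin down the indexing convention so that the ``dual'' basis is the indicator-of-up-set basis; I would state the convention explicitly and check it on the unit, recovering $\widetilde 1=[1]_{E'\subseteq E}$ as claimed.

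The substantive point is the multiplicativity statement $\widetilde{(yz)}=\widetilde y\circ\widetilde z$. The clean way to see this is to exhibit, for each subset $S\subseteq E(G)$, a $\bbK$-algebra homomorphism $\mathrm{ev}_S\colon\Phi_G\to\bbK$, and to show that $\widetilde z_S=\mathrm{ev}_S(z)$ up to reindexing. Concretely, since $u_e^2=q_eu_e$, the element $u_e/q_e$ is idempotent, so there is a well-defined evaluation sending $u_e\mapsto q_e$ for $e\in S$ and $u_e\mapsto 0$ for $e\notin S$; on the basis this gives $\mathrm{ev}_S(\alpha_{E'})=\prod_{e\in E'}[e\in S]=[E'\subseteq S]$, hence $\mathrm{ev}_S(z)=\sum_{E'\subseteq S}z_{E'}=\widetilde z_S$. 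Because each $\mathrm{ev}_S$ is a ring homomorphism, $\mathrm{ev}_S(yz)=\mathrm{ev}_S(y)\,\mathrm{ev}_S(z)$, which is precisely the $S$-component of the asserted Hadamard product. (Equivalently: $\Phi_G\cong\prod_{e}\bbK[u_e]/(u_e^2-q_eu_e)\cong\prod_e(\bbK\times\bbK)\cong\bbK^{2^{e(G)}}$ as $\bbK$-algebras, and one just needs to check the explicit isomorphism is the stated coordinate map.) Finally I would note the linear-independence of the $\mathrm{ev}_S$ (their matrix is again the invertible zeta matrix) to conclude that $z\mapsto(\mathrm{ev}_S(z))_S$ is an isomorphism, closing the circle with the basis claim.

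The only real obstacle is bookkeeping: getting the indexing of ``the basis'' versus ``its coordinates'' consistent, so that the vector attached to $\alpha_{E'}$ is genuinely a standard basis vector and the product comes out as the naive Hadamard product rather than some twisted convolution. I expect the write-up to spend most of its effort fixing that convention (equivalently, identifying $\widetilde z$ as the tuple of idempotent-evaluations $\mathrm{ev}_S(z)$) and then the algebra facts are one line each. I do not anticipate needing anything beyond the relations $u_e^2=q_eu_e$, $q_e\neq 0$, and elementary linear algebra; in particular Theorem~\ref{thm:PS} and the earlier deformation results are not used here, this lemma being purely about the ambient algebra $\Phi_G$.
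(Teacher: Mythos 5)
Your argument is correct, but its key step takes a different route from the paper's. For the Hadamard-product property the paper argues by direct computation: it first checks $\alpha_{E_1}\alpha_{E_2}=\alpha_{E_1\cup E_2}$, so that in the $\alpha$-coordinates multiplication is the union-convolution $(yz)_{E'}=\sum_{E_1\cup E_2=E'}y_{E_1}z_{E_2}$, and then sums over $E''\subseteq E'$ to see that the zeta transform turns this convolution into the coordinatewise product; the basis claim is handled by M\"obius inversion together with the dimension count $\dim\Phi_G=2^{e(G)}$. You instead interpret the tilde-coordinates as evaluations at the $2^{e(G)}$ characters of $\Phi_G$ (sending $u_e\mapsto q_e$ for $e\in S$ and $u_e\mapsto 0$ otherwise, well defined because any scalar image $c$ of $u_e$ must satisfy $c^2=q_ec$), verify $\mathrm{ev}_S(\alpha_{E'})=[E'\subseteq S]$ and hence $\mathrm{ev}_S(z)=\widetilde z_S$, after which multiplicativity of the coordinates is automatic since each $\mathrm{ev}_S$ is a ring homomorphism; equivalently, you invoke the splitting $\Phi_G\cong\prod_e\bbK[u_e]/(u_e^2-q_eu_e)\cong\bbK^{2^{e(G)}}$. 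Your route buys a conceptual explanation of why the Hadamard product appears, and the functionals $\mathrm{ev}_S$ also yield linear independence of the $\alpha_{E'}$ and invertibility of the zeta matrix without having to quote $\dim\Phi_G=2^{e(G)}$ separately; the paper's route is a shorter self-contained computation and makes the union-convolution formula explicit. One point you already flagged and should state cleanly in a write-up: the elements ``corresponding to $[0,\ldots,0,1,0,\ldots,0]$'' are not the $\alpha_{E'}$ (whose tilde-vectors are indicators of up-sets) but their M\"obius inverses, namely the primitive idempotents $\prod_{e\in E'}(u_e/q_e)\prod_{e\notin E'}(1-u_e/q_e)$; since you prove that $z\mapsto\widetilde z$ is a linear isomorphism onto $\bbK^{2^{e(G)}}$, the claim as stated follows, so this is only an indexing convention, not a gap.
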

\begin{proof}
The part about the summation is clear.

For any vector we can use M{\"o}bius inversion formula and get its element in the algebra $\Phi_{G,q}$. Since the dimension of the space of our vectors is $2^{e(G)}$, which is also the dimension of the algebra $\Phi_{G,q}$, elements corresponding to $[0,\ldots,0,1,0,\ldots,0]$ form a linear basis.

It is easy to check that for $E_1,E_2\subseteq E$, we have $$\alpha_{E_1}\alpha_{E_2}=\alpha_{E_1\cup E_2}.$$
Then we obtain 
$$(yz)_{E'}=\sum_{\substack{E_1,E_2:\\ E_1\cup E_2=E'}} y_{E_1}z_{E_2}.$$
After the change of coordinates, we get 
\begin{multline*} \widetilde{(yz)}_{E'}=\sum_{E''\subseteq E'}\sum_{\substack{E_1,E_2:\\ E_1\cup E_2=E''}} y_{E_1}z_{E_2}=\sum_{\substack{E_1,E_2:\\ E_1\cup E_2\subseteq E'}} y_{E_1}z_{E_2}\\
=
\left(\sum_{E_1\subseteq E'} y_{E_1}\right)\left(\sum_{E_2\subseteq E'} z_{E_2}\right)=\widetilde{y}_{E'}\widetilde{z}_{E'}.
\end{multline*}
Then our product in these coordinates coincides with the Hadamard product.
\end{proof}

Consider the following bijection between subsets of $E(G)$ and orientations of $G$.
For the subset $E'\subseteq E$ we define the following orientation: if $e\in E'$, then the orientation is from the biggest end to the smallest, otherwise the orientation is the opposite.

\begin{lemma}
\label{lem:Xi}
The element $X_i$ in coordinates is given by
$$\widetilde{X}_i
=\begin{bmatrix}\makebox[4em]{}\\
\begin{aligned}
 D_{\vec{G}}^+(i)
\end{aligned}
\\
\makebox[4em]{} \end{bmatrix}_{\vec{G}} - \left(\sum_{\substack{e\in E:\\
c_{i,e}=-1}} q_e\right) \cdot I,$$
where $D_{\vec{G}}^+(i)$ is $i$-th coordinate of a score vector $D_{\vec{G}}^+$.
\end{lemma}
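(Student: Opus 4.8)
The plan is to compute $\widetilde{X}_i$ directly from Lemma~\ref{lem:basis} by evaluating the Hadamard-transform coordinate of $X_i=\sum_{e:\,i\in e}c_{i,e}u_e$ at each orientation $\vec G$ (equivalently, at each subset $E'\subseteq E$ under the stated bijection). First I would expand $X_i$ in the $\alpha$-basis: since $u_e=q_e\alpha_{\{e\}}$, we have $X_i=\sum_{e:\,i\in e}c_{i,e}q_e\alpha_{\{e\}}$, so in the $z_{E'}$ coordinates of Lemma~\ref{lem:basis} only singleton sets $E'=\{e\}$ with $i\in e$ contribute, with $z_{\{e\}}=c_{i,e}q_e$ and $z_\emptyset=0$. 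Then the transform rule $\widetilde z_{E'}=\sum_{E''\subseteq E'}z_{E''}$ gives, for the subset $E'$ corresponding to an orientation $\vec G$,
$$\widetilde{X}_i(\vec G)=\sum_{\substack{e\in E':\ i\in e}}c_{i,e}q_e.$$

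Next I would match this sum against the two terms on the right-hand side of the claimed formula. By the bijection in the lemma preceding Lemma~\ref{lem:Xi}, an edge $e=(i,j)$ lies in $E'$ exactly when $\vec e$ is oriented from its larger endpoint to its smaller one. So if $i<j$ (hence $c_{i,e}=1$), then $e\in E'$ iff $\vec e$ points toward $i$, i.e. $end(\vec e)=i$; and if $i>j$ (hence $c_{i,e}=-1$), then $e\in E'$ iff $\vec e$ points toward $j$, i.e. $end(\vec e)\ne i$, while $e\notin E'$ iff $end(\vec e)=i$. Plugging these into the sum, the edges incident to $i$ with $c_{i,e}=1$ contribute $q_e$ precisely when $end(\vec e)=i$, and the edges with $c_{i,e}=-1$ contribute $-q_e$ precisely when $end(\vec e)\ne i$. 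I would then rewrite the second group as $-\sum_{e:\,c_{i,e}=-1}q_e+\sum_{e:\,c_{i,e}=-1,\ end(\vec e)=i}q_e$, so that altogether
$$\widetilde{X}_i(\vec G)=\sum_{\substack{e:\ i\in e\\ end(\vec e)=i}}q_e\;-\;\sum_{\substack{e\in E:\\ c_{i,e}=-1}}q_e \;=\; D_{\vec G}^+(i)\;-\;\Bigl(\sum_{\substack{e\in E:\\ c_{i,e}=-1}}q_e\Bigr),$$
using the definition of the score vector $D_{\vec G}^+$. Since this holds for every orientation $\vec G$ (every coordinate), and the constant term is independent of $\vec G$, it reads exactly as the asserted identity $\widetilde{X}_i=[D_{\vec G}^+(i)]_{\vec G}-\bigl(\sum_{c_{i,e}=-1}q_e\bigr)\cdot I$, because $I=\widetilde 1=[1]_{\vec G}$.

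The only real care needed is bookkeeping with the orientation bijection and the sign convention $c_{i,e}$: one must keep straight that "$e\in E'$" means oriented big-to-small, and track which incident edges of $i$ have $i$ as the larger versus the smaller endpoint. I do not expect any genuine obstacle — the statement is essentially a reorganization of the definitions of the Hadamard transform, the subset/orientation bijection, and the score vector — so the proof is a short direct computation as above.
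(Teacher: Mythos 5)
Your proposal is correct and follows essentially the same route as the paper: compute the tilde-coordinate $\widetilde{X}_{i,E'}=\sum_{e\in E',\, i\in e}c_{i,e}q_e$ via Lemma~\ref{lem:basis}, then use the subset--orientation bijection to identify this with $D_{\vec{G}}^+(i)-\sum_{c_{i,e}=-1}q_e$. Your version just spells out the sign/orientation bookkeeping a bit more explicitly than the paper does.
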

\begin{proof}
Recall the definition $$X_i=\sum_{e: \ i\in e} c_{i,e}  u_e, \ i\in[n],$$
which gives $$\tX_{i,E'}=\sum_{e\in E': \ i\in e} c_{i,e}  q_e, \ i\in[n].$$ Note that
$$D_{E'}^+(i)=\left(\sum_{\substack{e\in E':\\
c_{i,e}=1}} q_e\right)+\left(\sum_{\substack{e\notin E':\\
c_{i,e}=-1}} q_e\right),$$
this equality with previous finish our proof of lemma.
\end{proof}  

\medskip

We use in the proof of Theorem~\ref{thm:different} the following elements $$\widetilde{A}_i:
=\begin{bmatrix}\makebox[4em]{}\\
\begin{aligned}
 D_{\vec{G}}^+(i)
\end{aligned}
\\
\makebox[4em]{} \end{bmatrix}_{\vec{G}}.$$
We need another technical lemma.
\begin{lemma}
\label{lem:dim}
For an element $R\in \Phi_G$, the dimension of the space generated by $R$ (i.e,  
$\Span{{<}1,R,R^2,\ldots{>}}$) is equal to the number of different coordinates of the vector $\widetilde{R}$.
\end{lemma}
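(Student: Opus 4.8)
The plan is to use the Hadamard-product description of multiplication in $\Phi_G$ from Lemma~\ref{lem:basis}. Write the coordinate vector $\widetilde{R}\in\bbK^{2^{e(G)}}$ and let $v_1,\ldots,v_s$ be its distinct entries (values, not positions), so $s$ is the number claimed. Under the change of coordinates, every power $R^m$ has coordinate vector $\widetilde{R^m}=\widetilde{R}^{\circ m}$, the entrywise $m$-th power, and the unit $1$ has coordinate vector $I=[1,\ldots,1]$. Hence the subspace $\Span\{1,R,R^2,\ldots\}$ maps isomorphically onto the linear span inside $\bbK^{2^{e(G)}}$ of the vectors $[v_{c(E')}^m]_{E'}$ for $m=0,1,2,\ldots$, where $c(E')$ records which of the values $v_1,\ldots,v_s$ occurs in coordinate $E'$.

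First I would observe that this span only depends on the multiset of values appearing, and in fact only on the \emph{set} $\{v_1,\ldots,v_s\}$: grouping coordinates by value, the span is linearly isomorphic to the span in $\bbK^s$ of the vectors $(v_1^m,\ldots,v_s^m)$, $m\ge 0$. So the dimension of $\Span\{1,R,R^2,\ldots\}$ equals the dimension of the row space of the infinite matrix whose $m$-th row is $(v_1^m,\ldots,v_s^m)$. Since the $v_j$ are pairwise distinct, the $s\times s$ Vandermonde matrix built from exponents $0,1,\ldots,s-1$ is invertible, so these $s$ vectors are already linearly independent; and clearly no more than $s$ of them can be independent since they all lie in $\bbK^s$. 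Therefore the dimension is exactly $s$, the number of distinct coordinates of $\widetilde{R}$.

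To make this fully rigorous I would phrase it via the evaluation map: send a polynomial $p(t)\in\bbK[t]$ to $p(R)$; its image is exactly $\Span\{1,R,R^2,\ldots\}$, and in coordinates $p(R)$ has vector $[p(v_{c(E')})]_{E'}$. This is the zero vector iff $p$ vanishes on the finite set $\{v_1,\ldots,v_s\}$, i.e.\ iff $p$ is divisible by $\prod_{j=1}^s(t-v_j)$. Hence the kernel of this map is the principal ideal generated by a polynomial of degree $s$, so the image has dimension $s$.

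\textbf{Main obstacle.} There is no serious obstacle; the only thing requiring a little care is bookkeeping the two layers of indexing (coordinates indexed by subsets $E'$ versus grouping them by the value they carry) and confirming that passing from the full vector to the tuple of distinct values neither creates nor destroys linear relations — which is immediate once one notes that $p(R)=0$ depends only on the values $p$ takes on $\{v_1,\ldots,v_s\}$.
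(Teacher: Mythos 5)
Your proposal is correct and follows essentially the same route as the paper: both rest on the Hadamard-product coordinates of Lemma~\ref{lem:basis} and the observation that the annihilator of $R$ in $\bbK[t]$ is generated by $\prod_{\beta}(t-\beta)$ over the distinct coordinate values, so the span of powers has dimension equal to the number of those values. Your evaluation-map (and Vandermonde) phrasing is just a slight repackaging of the paper's minimal-annihilating-polynomial argument.
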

\begin{proof}
Denote by $\mathcal M$ the set of all values of coordinates of $\widetilde{R}$.
Consider an annihilating polynomial of $R$
$$f(x):=\prod_{\beta\in {\mathcal M}} (x-\beta)\ \ \ \ f(R)=\prod_{\beta\in {\mathcal M}} (R-\beta)=0,$$
really it is annihilating polynomial, because by Lemma~\ref{lem:basis} we have
$$\widetilde{f(R)}=\prod_{\beta\in {\mathcal M}} (\tilde{R}-\beta\cdot I).$$
Check coordinate $E'\subseteq E$: $R_{E'}\in {\mathcal M}$, then there is factor of the product, which has zero $E'$-coordinate. Hence, the product has zero $E'$-coordinate. Then all coordinates are zeroes, i.e., $f$ is an annihilating polynomial of $R$.

Let $g(x)$ be the minimal unitary annihilating polynomial  of $R$, it is clear that
$$\deg(g)=\dim(\Span{{<}1,R,R^2,\ldots{>}}).$$ 
We have $g|f$, if $\dim(\Span{{<}1,R,R^2,\ldots{>}})<|{\mathcal M}|,$ then there is $\alpha \in {\mathcal M}$ such that $g|\frac{f}{(x-\alpha)}$, hence,
$$\prod_{\beta\in {\mathcal M}\setminus \{\alpha\}} (R-\beta)=0.$$
Consider coordinate $E'\subseteq E$ such that $R_{E'}=\alpha$. We have $(R-\beta)_{E'}\neq,\ \beta\in {\mathcal M}\setminus \{\alpha\}$, hence, the previous product has no zero $E'$-coordinate, which impossible. We obtain $g=f$, which finishes our proof.
\end{proof}

\medskip 

Now we can prove Theorem~\ref{thm:different}.
\begin{proof}[Proof of Theorem~\ref{thm:different}]
By Lemma~\ref{lem:Xi} we can change the set of generators $X_i,\ i\in V(G)$ to the set $A_i,\ i\in V(G)$.
If two orientations have the same score vector, then the corresponding coordinates in $\I$ and in $\widetilde{A}_i,\ i\in V(G)$ coincide. Using Lemma~\ref{lem:basis}, we get that they coincide for any element from algebra $\Psi_G$, hence,  $$dim(\Psi_G)\leq \#\{D\in\bK^n:\ \exists \vec{G} \text{\ such that }D=D_{\vec{G}}^+ \}.$$
For converse, we consider an element $$R=r_0 + r_1 A_1+\ldots +r_nA_n, $$
where $r_i\in \bQ$ and are generic. 

The coordinates $\widetilde{R}$ are non-zeroes and, for two orientations, they coincide if and only if their score vectors coincide. Then, by Lemma~\ref{lem:dim} the dimension of the subalgebra generated by $R$ is equal to number of different score vectors. Since $R$ belongs to $\Psi_G$, we obtain 
$$dim(\Psi_G)\geq \#\{D\in\bK^n:\ \exists \vec{G} \text{\ such that }D=D_{\vec{G}}^+ \},$$
which with the upper bound gives equality.
\end{proof}

\bigskip

Using Lemma~\ref{lem:dim}
 we can calculate the minimal annihilating polynomial for any linear combination of vertices.
\begin{proposition}
\label{prop:ann}
Given  weighted graph $G$. For an element $X\cdot t=X_1t_1+\ldots+X_nt_n,\ t\in \bK^n$  the minimal annihilating polynomial of it is given by
$$\prod_{s\in \D_I}\left(X\cdot t-s+z\right)=0,$$
where $${\D}_I=\{D_{\vec{G}}^+\cdot t:\  \ \vec{G}\}\ \ \ {\textrm and} \ \ \ z=\sum_{\substack{i,\ e:\\
c_{i,e}=-1}} q_e t_i.$$
\end{proposition}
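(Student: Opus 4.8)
The plan is to reduce the computation of the minimal annihilating polynomial of $X\cdot t = \sum_i X_i t_i$ to a direct application of Lemma~\ref{lem:dim}, just as in the proof of Theorem~\ref{thm:different}. First I would pass from the generators $X_i$ to the shifted generators $A_i$: by Lemma~\ref{lem:Xi} we have $\widetilde{X}_i = \widetilde{A}_i - \bigl(\sum_{e:\,c_{i,e}=-1} q_e\bigr)\cdot I$, so writing $A\cdot t = \sum_i A_i t_i$ and $z = \sum_{i,e:\,c_{i,e}=-1} q_e t_i$ we get $X\cdot t = A\cdot t - z\cdot I$. Consequently the minimal annihilating polynomial $g(x)$ of $X\cdot t$ and the minimal annihilating polynomial $h(x)$ of $A\cdot t$ are related by $g(x) = h(x+z)$, so it suffices to show that the roots of $h$ are exactly the numbers $D^+_{\vec{G}}\cdot t$ as $\vec{G}$ ranges over all orientations.

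Next I would compute the coordinate vector $\widetilde{A\cdot t}$ explicitly. By Lemma~\ref{lem:Xi} the $\vec{G}$-coordinate of $\widetilde{A}_i$ is $D^+_{\vec{G}}(i)$, and since summation is coordinatewise (Lemma~\ref{lem:basis}), the $\vec{G}$-coordinate of $\widetilde{A\cdot t}$ is $\sum_i t_i D^+_{\vec{G}}(i) = D^+_{\vec{G}}\cdot t$. Therefore the set $\mathcal{M}$ of distinct values among the coordinates of $\widetilde{A\cdot t}$ is precisely $\D_I = \{D^+_{\vec{G}}\cdot t : \vec{G}\}$. Now Lemma~\ref{lem:dim} (more precisely, its proof, which identifies the minimal unitary annihilating polynomial of an element $R\in\Phi_G$ with $\prod_{\beta\in\mathcal{M}}(x-\beta)$, where $\mathcal{M}$ is the set of values of the coordinates of $\widetilde{R}$) tells us that $h(x) = \prod_{s\in\D_I}(x-s)$. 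Substituting back, $g(x) = h(x+z) = \prod_{s\in\D_I}(X\cdot t - s + z)$ evaluated at $x = X\cdot t$, which is exactly the claimed relation.

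The only subtlety — and the step I would treat most carefully — is the passage from Lemma~\ref{lem:dim}'s statement (which asserts equality of the \emph{dimension} of $\Span\langle 1, R, R^2,\dots\rangle$ with $|\mathcal{M}|$) to the identification of the \emph{minimal polynomial} itself as $\prod_{\beta\in\mathcal{M}}(x-\beta)$. This is genuinely contained in the proof of Lemma~\ref{lem:dim}: there it is shown both that $f(x) := \prod_{\beta\in\mathcal{M}}(x-\beta)$ annihilates $R$ and that no proper divisor of $f$ does, so $f$ is the minimal unitary annihilating polynomial. I would simply cite that argument applied to $R = A\cdot t \in \Phi_G$, noting that $A\cdot t$ is indeed a bona fide element of $\Phi_G$ (a $\bbK$-linear combination of the $A_i$, which live in $\Phi_G$), so all the hypotheses of Lemma~\ref{lem:dim} and its proof are met verbatim. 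No further computation is needed.
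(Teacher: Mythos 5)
Your proposal is correct and follows essentially the same route as the paper: the paper's own (very terse) proof likewise reduces everything to Lemma~\ref{lem:dim} (whose proof identifies the minimal polynomial as $\prod_{\beta\in\mathcal M}(x-\beta)$) and uses Lemma~\ref{lem:Xi} to read off the coordinate values of $\widetilde{X\cdot t}$ as $D^+_{\vec G}\cdot t - z$. Your only deviation is the cosmetic shift through the elements $A_i$ before translating back by $z$, and you correctly flag the one point the paper leaves implicit, namely that it is the proof of Lemma~\ref{lem:dim}, not just its dimension statement, that pins down the minimal polynomial.
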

\begin{proof} We have everything inside Lemma~\ref{lem:dim}. We should just find the set of all values of coordinates of $\tilde{X}$, which we know by Lemma~\ref{lem:Xi}.
\end{proof}

In case of Hecke deformations it gives all defining relations between $X_i,\ i\in V(G)$, see Theorem~\ref{thm:quo-Hecke}.
\begin{problem}
Find all relations between $X_i,\ i\in V(G)$. In other words, define  $\Psi_{G,Q}$ 
as a quotient algebra of the polynomial ring.
\end{problem}

\bigskip

\section{Case $E=E_1\sqcup\ldots\sqcup E_k$ and generic $q_1,\ldots,q_k\in \bK$}
\label{experiments}

We can not describe the Hilbert polynomial of $\Psi_{G,Q}$.  We suggest to start from the following type of algebras: when different parameters are in a generic position. In this case we know the total dimension in terms of forests.
\begin{theorem}
\label{thm:split}
Let $G$ be a graph,  given a partition $E=E_1\sqcup\ldots\sqcup E_k$ of edges and generic $q_1,\ldots,q_k\in \bK$ ($q_e=q_i$, for $e\in E_i$). Then the dimension of the algebra $\Psi_{G,Q}$ equals the number $k$-tuples of spanning forests such that $F_i\subseteq E_i$. In other words, 
$$\dim (\Psi_{G,Q})=\prod_{i=1}^k\#\{F\subseteq E_i\;\vert\quad  F\;\text{ is a forest}\}.$$
\end{theorem}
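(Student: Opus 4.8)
The plan is to combine Theorem~\ref{thm:different} with a counting argument that decouples the score vectors along the given partition. By Theorem~\ref{thm:different}, $\dim(\Psi_{G,Q})$ equals the number of distinct score vectors $D_{\vec{G}}^+\in\bbK^n$ as $\vec{G}$ ranges over all orientations of $G$. Since every edge $e\in E_i$ contributes the common weight $q_i$ to the coordinate of its head, each score vector decomposes as $D_{\vec{G}}^+=\sum_{i=1}^k q_i\,\delta^{(i)}_{\vec{G}}$, where $\delta^{(i)}_{\vec{G}}\in\bZ^n$ is the in-degree vector (with respect to the orientation $\vec{G}$ restricted to $E_i$) of the subgraph $(V(G),E_i)$. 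Because an orientation of $G$ is nothing but an independent choice of orientation on each $E_i$, the set of attainable tuples $(\delta^{(1)},\ldots,\delta^{(k)})$ is exactly the product $\prod_{i=1}^k\{\text{in-degree vectors of orientations of }(V(G),E_i)\}$.

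The key step is then to argue that, for generic $q_1,\ldots,q_k$, the map sending a tuple $(\delta^{(1)},\ldots,\delta^{(k)})$ of in-degree vectors to $\sum_i q_i\delta^{(i)}$ is injective. Two tuples collide precisely when $\sum_i q_i(\delta^{(i)}-\delta'^{(i)})=0$ for some not-all-equal pair; viewing this as a linear relation among the $q_i$ with integer coefficients $\delta^{(i)}-\delta'^{(i)}$ bounded by $e(E_i)$ in absolute value, there are only finitely many such potential relations, so a generic choice of $(q_1,\ldots,q_k)$ avoids all of them. (More precisely: the set of "bad" $(q_1,\ldots,q_k)$ lies in a finite union of proper linear subspaces of $\bbK^k$, hence is non-dense, so a generic — in particular, a suitably algebraically independent over $\bQ$ — choice works.) Under this injectivity, the number of distinct score vectors equals $\prod_{i=1}^k\#\{\text{in-degree vectors of }(V(G),E_i)\}$.

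Finally I would invoke Proposition~\ref{cor:forests=orient} coordinatewise: for each $i$, the number of distinct in-degree vectors of orientations of the graph $(V(G),E_i)$ equals the number of spanning forests of $(V(G),E_i)$, i.e.\ the number of subsets $F\subseteq E_i$ that are forests. Multiplying over $i$ gives exactly $\prod_{i=1}^k\#\{F\subseteq E_i\mid F\text{ is a forest}\}$, as claimed. Combining with Theorem~\ref{thm:different} completes the proof.

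The main obstacle is making the genericity step clean: one must be careful that "generic" is interpreted consistently with its use elsewhere in the paper (a choice outside a finite union of hyperplanes in $\bbK^k$, equivalently an algebraically independent choice over $\bQ$ when $\bbK$ is large enough), and that the coefficient vectors $\delta^{(i)}-\delta'^{(i)}$ are genuinely bounded so that only finitely many forbidden hyperplanes arise. Everything else — the decomposition of score vectors and the coordinatewise application of Proposition~\ref{cor:forests=orient} — is essentially bookkeeping.
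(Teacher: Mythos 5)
Your argument is correct, and it is exactly the route the paper intends: the paper states Theorem~\ref{thm:split} without a written proof, immediately after Theorem~\ref{thm:different} and Proposition~\ref{cor:forests=orient}, and the evident intended argument is your decomposition $D_{\vec{G}}^+=\sum_i q_i\,\delta^{(i)}_{\vec{G}}$, the genericity/injectivity step (a finite union of proper hyperplanes in $\bbK^k$, which also guarantees all $q_i\neq 0$ so Theorem~\ref{thm:different} applies), and the coordinatewise use of Proposition~\ref{cor:forests=orient} on each $(V(G),E_i)$. No gaps; your genericity bookkeeping is the only point needing care, and you handle it adequately.
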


\begin{problem}
\label{ques:split}
What is the Hilbert polynomial $HS_{\Psi_{G,Q}}$ in case $E=E_1\sqcup\ldots\sqcup E_k$ and generic $q_1,\ldots,q_k\in \bK$?

It seems that it is impossible to reconstruct the Hilbert polynomial  from the Tutte polynomial.
For example, let $G$ be the graph on two vertices with $k$ multiply edges, then its Tutte polynomial is given by $$T_G(x,y)=x+y+\ldots+y^{k-1},$$
and the Hilbert polynomial, when each edge has a self generic parameter is
$$HS_{\Psi_{G,Q}}=1+t+\ldots+t^{2^k-1}.$$
In each case it is not a specialization of the Tutte polynomial. 
\end{problem}

Here we present the Hilbert polynomial of algebras for complete graphs. Our tables correspond to algebras (1) with the same parameter; (2) with the same parameters except for one edge and (3) where all parameters are generic. By Theorem~\ref{thm:split} we know their total dimensions, in the first case we also know the Hilbert polynomial.

\subsection{Hilbert polynomials of $\C_{K_n}$ and $\Psi_{K_n,q}$}

\begin{center}
    \begin{tabular}{ | l | l | l | l | l | l | l |  l | l | l | l | l|p{5cm} |}
    \hline
    Graph $\backslash \HH  (t)$  & $0$ & $1$  & $2$ & $3$  & $4$ & $5$ & $6$& $7$& $8$ & $9$& $10$ \\ \hline
    
    $K_2$& $1$ & $1$ & &&&&&&&& \\ \hline
    
   $K_3$& $1$ & $2$ & $3$ & $1$ & &&&&&& \\ \hline
   
   $K_4$& $1$ & $3$ & $6$ & $10$ & $11$ & $6$ & $1$ &&&&\\ \hline 
     
     $K_5$& $1$ & $4$ & $10$ & $20$ & $35$ & $51$ & $64$ & $60$ & $35$ & $10$ & $1$\\ \hline 
    \end{tabular}
\end{center}

\subsection{Hilbert polynomials of $\Psi_{K_n,Q}$, when $E_1=E(K_n)\backslash \{e\}$ and $E_2=\{e\}$}

\begin{center}
    \begin{tabular}{ | l | l | l | l | l | l | l |  l | l | l | l | l|  l|p{5cm} |}
    \hline
    Graph $\backslash  \HH  (t)$  & $0$ & $1$  & $2$ & $3$  & $4$ & $5$ & $6$& $7$& $8$ & $9$& $10$\\ \hline
    
    $K_2$& $1$ & $1$ & &&&&&&&&\\ \hline 
    
   $K_3$& $1$ & $2$ & $3$ & $2$ & &&&&&& \\ \hline 
   
   $K_4$& $1$ & $3$ & $6$ & $10$ & $13$ & $11$ & $4$&&&& \\ \hline 
    $K_5$& $1$ & $4$ & $10$ & $20$ & $35$ & $53$ & $72$ & $83$ & $72$ & $38$ & $8$\\ \hline 
     
    \end{tabular}
\end{center}

\subsection{Hilbert polynomials of $\Psi_{K_n,Q}$, when $Q$ is generic}
\label{table:generic}

\begin{center}
    \begin{tabular}{ | l | l | l | l | l | l | l |  l | l | l | l | l|  l|p{5cm} |}
    \hline
    Graph $\backslash \HH  (t)$  & $0$ & $1$  & $2$ & $3$  & $4$ & $5$ & $6$& $7$& $8$ & $9$ & $10$ & $11$\\ \hline
    
    $K_2$& $1$ & $1$ & &&&&&&&&&\\ \hline 
    
   $K_3$& $1$ & $2$ & $3$ & $2$ &&&&&&&& \\ \hline 
   
   $K_4$& $1$ & $3$ & $6$ & $10$ & $15$ & $19$ & $10$ &&&&&\\ \hline 
    $K_5$& $1$ & $4$ & $10$ & $20$ & $35$ & $56$ & $84$ & $120$ & $165$ & $220$ & $217$ & $92$ \\ \hline  
    \end{tabular}
\end{center}
Note that in last case for $K_5$, the $11$-th graded component is not empty, because otherwise the total dimension at most $1+4+10+..+220+286=1001$, but by Theorem~\ref{thm:different} the total dimension is $2^{\binom{5}{2}}=1024$.

\bigskip

\section{Deformations of Postnikov-Shapiro algebras counting spanning trees and internal algebras}
\label{final}
In this section we consider other types of Postnikov-Shapiro algebras, namely $\B_G^T$ counting spanning trees and internal algebra $\B_G^{In}$ defined in~\cite{AP,HR}. Their Hilbert series are known.
\begin{theorem}[{\bf T} cf.~\cite{PS}; {\bf In} cf.~\cite{AP,HR}]
For graph $G$, the Hilbert polynomials $\HH_{\B_{G,q}^*}(t)$ of this filtration are given by 
$$\HH  _{\B_G^T}(t)=T_G\left(1,\frac{1}{t}\right)\cdot t^{e(G)-v(G)+c(G)},$$
$$\HH  _{\B_G^{In}}(t)=T_G\left(0,\frac{1}{t}\right)\cdot t^{e(G)-v(G)+c(G)}.$$
\end{theorem}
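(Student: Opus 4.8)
\textbf{Proof plan for the Hilbert polynomials of $\B_G^T$ and $\B_G^{In}$.}
The statement as phrased is a known result, but a self-contained argument consistent with the methods of the present paper would run as follows. The plan is to treat $\B_G^T$ and $\B_G^{In}$ in parallel with the treatment of $\B_G = \C_G$, using the activity decomposition of spanning forests and the corresponding Gr\"obner-basis analysis. First I would recall that $\B_G^T$ and $\B_G^{In}$ are the quotients $\bbK[x_1,\dots,x_n]/J_G^k$ for $k=0$ and $k=-1$ respectively, where $J_G^k$ is generated by the powers $(\sum_{i\in I}x_i)^{d_I+k}$ over nonempty $I\subseteq V(G)$. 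For $k=0$ one must note that the generator associated with $I$ such that $d_I=0$ (i.e.\ $I$ a union of connected components) forces $\sum_{i\in I}x_i$ to be nilpotent of order $1$, hence zero; so effectively one works modulo the subspace spanned by component-sums, which is why the shift by $e(G)-v(G)+c(G)$ appears rather than $e(G)-v(G)$, and similarly for $k=-1$ the relevant edge cuts are the nontrivial ones.

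The core step is to exhibit a monomial basis compatible with a graded lexicographic (or degree-reverse-lexicographic) term order, indexed by spanning forests, so that the three algebras $\B_G, \B_G^T, \B_G^{In}$ share the same combinatorial bookkeeping but with the degree of the basis element attached to a forest $F$ shifted according to the parameter $k$. Concretely, for a spanning forest $F$ with external activity $\varepsilon(F)$ and internal activity $\iota(F)$, the corresponding basis monomial should sit in graded degree $e(G)-e(F)-\varepsilon(F)$ in $\B_G$; passing to $\B_G^T$ deletes the forests that are not spanning trees (equivalently restricts to $F$ with $e(F)=v(G)-c(G)$), which is exactly the specialization $x\mapsto 1$ in $T_G(x,y)$; passing to $\B_G^{In}$ additionally requires internal activity zero, which is the specialization $x\mapsto 0$. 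The factor $t^{e(G)-v(G)+c(G)}$ then records the uniform degree shift between ``external activity'' bookkeeping and the ``corank--nullity'' normalization of the Tutte polynomial, i.e.\ the identity
$$\sum_{F}\, t^{\,\varepsilon(F)} x^{\,\iota(F)} \;=\; T_G\!\left(x,\tfrac1t\right)\cdot t^{\,e(G)-v(G)+c(G)}$$
specialized at $x=1$ and $x=0$ respectively, summing over spanning trees in the first case and over spanning trees of internal activity $0$ in the second.

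To make the Gr\"obner argument rigorous I would follow the strategy already used for Theorem~\ref{thm:PS} and Theorem~\ref{thm:same}: compute a Gr\"obner basis of $J_G^k$ for the chosen term order, identify the standard monomials with broken-circuit-type data, and invoke a deletion--contraction recursion matching the one satisfied by the Tutte polynomial. The deletion--contraction step is where $d_I$ shifts correctly: deleting a non-bridge non-loop edge $e$ decreases $d_I$ by one for every cut $I$ separating the endpoints of $e$, while contracting $e$ identifies two vertices; checking that $J_G^k$ restricts and quotients compatibly under these operations, and that the base cases (single vertex, single edge, bridges and the already-excluded loops) give the right polynomials, completes the induction.

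\textbf{Expected main obstacle.} The delicate point is the boundary behavior at $k=0$ and especially $k=-1$: for $k=-1$ some exponents $d_I-1$ can be zero (when $I$ is an edge cut of size one, i.e.\ a bridge), so the generator $(\sum_{i\in I}x_i)^{0}=1$ lies in the ideal and the quotient collapses unless bridges are handled separately — which is precisely why the internal algebra ``sees'' only bridgeless contributions and why the specialization is $T_G(0,1/t)$ rather than something larger. Getting the deletion--contraction bookkeeping to interact correctly with these degenerate generators, and confirming that the resulting standard-monomial count is the claimed Tutte specialization with the stated power-of-$t$ shift, is the step that requires the most care; the rest is a routine adaptation of the Gr\"obner-basis bijection already established for $\C_G$.
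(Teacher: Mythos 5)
First, a point of comparison: the paper does not prove this statement at all. It is quoted as known, with the tree case attributed to \cite{PS} and the internal case to \cite{AP,HR}, and it is later used as an input (e.g.\ $\dim(\B_G^{In})=T_G(0,1)$ in the proof of Theorem~\ref{thm:Internal}). So your proposal cannot be matched against an in-paper argument; judged on its own, it is a plan rather than a proof, and the plan defers precisely the steps that constitute the theorem. You correctly identify the combinatorial target (graded dimensions counting spanning trees by external activity, resp.\ trees of internal activity zero, with the shift $t^{e(G)-v(G)+c(G)}$), but the two central claims are only asserted. (1) That the standard monomials of $J_G^0$ and $J_G^{-1}$ under a graded term order are exactly the tree-indexed, resp.\ internally-inactive-tree-indexed, subsets of the forest-indexed basis of $\B_G$: this nesting of bases across $k=1,0,-1$ is a genuine theorem of zonotopal algebra (it is part of what Holtz--Ron and Ardila--Postnikov prove), not a formal consequence of the external case, and in particular the upper bound on $\dim\B_G^T$ and $\dim\B_G^{In}$ (that the ideals $J_G^0, J_G^{-1}$ are not smaller than claimed) gets no argument here. (2) That $J_G^k$ is compatible with deletion--contraction: in the literature this requires constructing short exact sequences relating the algebras of $G$, $G\setminus e$, $G/e$, and that verification is the bulk of the work; calling it ``a routine adaptation of the Gr\"obner-basis bijection already established for $\C_G$'' is not justified, since the proof of Theorem~\ref{thm:PS} in \cite{PS,PSS} (monomial deformation/Gr\"obner degeneration for the external ideal) does not carry over verbatim to $k=0,-1$ -- this is exactly why the internal case needed the later machinery of \cite{AP,HR}.

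Two smaller inaccuracies. With the paper's literal definition ($I$ ranging over all nonempty subsets), $d_I=0$ for $I$ a union of components gives the generator $\left(\sum_{i\in I}x_i\right)^{0}=1$ when $k=0$, so the quotient would vanish; the standard reading (as in \cite{PS}) excludes a root vertex in each component, which is equivalent to your ``work modulo component sums'' fix but not to your statement that the generator forces $\sum_{i\in I}x_i$ to be ``nilpotent of order $1$''. For $k=-1$, the bridge degeneration needs no special handling: if $G$ has a bridge then $1\in J_G^{-1}$ and $\B_G^{In}=0$, which agrees with $T_G(0,1/t)=0$, so this is a consistency check rather than an obstacle -- but again it should fall out of the basis construction, which is the part missing from your write-up.
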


At first we define algebras $\Phi_{G,Q}^T$ and $\Phi_{G,Q}^{In}$. To shorten the notations  define the following polynomial for a subset of vertices $I\subseteq V(G)$
$$f_I^*=\prod_{e\in E(I,\bar{I})} u_e^*\prod_{e\in E(\bar{I},I)} (u_e^*-q_e),$$
where $E(I,\bar{I})=\left\{\substack{e=(i,j)\ i\in I, j\notin I:\\ c_{i,e}=1}\right\}$ and $E(\bar{I},I)=\left\{\substack{e=(i,j)\ i\in I, j\notin I:\\ c_{i,e}=-1}\right\}$. ($*$ is either $T$ or ${In}$)

For a connected graph $G$  and a set  of parameters $Q=\{q_e\in \bK: \ e\in E(G)\}$, fix a vertex $g$, define 
 $\Phi_{G,Q}^T$,  as the commutative algebra generated by the variables $\{u_e^T : \ e\in E(G)\}$ and  satisfying 
 $$(u_e^T)^2=q_e u_e^T,\; \text{for every edge\; } e\in G;$$
  $$f_I^T=0,\; \text{for every subset $g\in I\subseteq V(G)$}.$$
In case when all $q_e\neq 0$, define 
 $\Phi_{G,Q}^{In}$ as the commutative algebra generated by the variables $\{u_e^{In} : \ e\in E(G)\}$ satisfying 
 $$(u_e^{In})^2=q_e u_e^{In},\; \text{for every edge\; } e\in G;$$
 
$$ f_I^{In}=0,
 \text{for every subset $ I\subseteq V(G)$}.$$

Let $V(G)=[n]$ be the vertex set  of a graph $G.$ Define the algebra  $\Psi_{G,Q}^T$ and $\Psi_{G,Q}^{In}$ as a filtered subalgebras of $\Phi_{G,Q}^T$ and f $\Phi_{G,Q}^{In}$ generated by the elements:
$$X_i^T=\sum_{e: \ i\in e} c_{i,e}  u_e^T, \ i\in[n],$$
$$X_i^{In}=\sum_{e: \ i\in e} c_{i,e}  u_e^{In}, \ i\in[n],$$
where $c_{i,e}$ are the same as in~\eqref{eq:def_cie}.

\begin{remark}
Note that we can write one equation $f_I^{In}-f_{\bar I}^{In}=0$ instead of two equations $f_I^{In}=f_{\bar I}^{In}=0$.
Really, assume $c_{i,e}=1$  (if $=-1$ everything is similar), then we have 
$$u_e^{In}(f_I^{In}-f_{\bar I}^{In})=q_ef_I^{In},$$
and 
$$(u_e-q_e)^{In}(f_I^{In}-f_{\bar I}^{In})=q_ef_I^{In}.$$
Sometimes that can be useful, because we decrease the number of equations and lower the degrees of those equations. 
\end{remark}

\medskip
 In case when  all parameters coincide, i.e., $q_e=q$, $\forall e\in G,$  we denote the corresponding algebras by  $\Psi_{G,q}^T$ and $\Psi_{G,q}^{In}$ resp. The algebra $\Psi_{G,0}^T$ coincides with  $\C_G^T$, the dimension of $\C_G^T$ is equal to the number of spanning trees (see~\cite{PS}). We refer to $\Psi_{G,q}^T$ and $\Psi_{G,q}^{In}$ as the {\it Hecke deformation}.

\begin{theorem}[cf.~\cite{PS}]
\label{thm:PS-tree}

 For any graph $G$, the algebras  ${\B}_{G}^T$ and $\Psi_{G,0}^T={\C}^T_{G}$ are isomorphic,
 their total dimension over $\bK$ is equal to the number of spanning trees in $G$.

Moreover, the dimension of the $k$-th graded component of these algebras  equals
the number of spanning trees $T$ of $G$ with external activity $e(G)-e(T)-k$.
\end{theorem}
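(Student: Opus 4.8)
The plan is to mimic the proof of Theorem~\ref{thm:PS-tree}'s older sibling, Theorem~\ref{thm:PS}: exhibit a presentation of $\C_G^T$ as a quotient of a polynomial ring whose Hilbert series is already known to count spanning trees by external activity, and then identify that quotient with $\B_G^T$. Concretely, first I would observe that setting $q_e=0$ for all $e$ turns the relations $(u_e^T)^2=q_eu_e^T$ into $(u_e^T)^2=0$ and turns $f_I^T=\prod_{e\in E(I,\bar I)}u_e^T\prod_{e\in E(\bar I,I)}(u_e^T-q_e)$ into $\pm\prod_{e\in E(I,\bar I)\cup E(\bar I,I)}u_e^T=\pm\prod_{e\in \partial I}u_e^T$, i.e.\ the monomial in the edges of the cut defined by $I$ (with the sign irrelevant modulo the square-zero relations). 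Thus $\Phi_{G,0}^T$ is the square-free algebra on the edges with all cut-monomials $\prod_{e\in\partial I}\phi_e$ (for $g\in I$) killed, and $\C_G^T=\Psi_{G,0}^T$ is the subalgebra generated by the $X_i^T=\sum_{e\ni i}c_{i,e}\phi_e$.

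Next I would run the $q\to 0$ degeneration argument that already appears in \S~\ref{sec:hecke} (proof of Theorem~\ref{thm:same}) and in the proof of Theorem~\ref{thm:quo-Hecke}: Proposition~\ref{prop:ann}, applied in the tree setting, produces for each $t\in\bbK^n$ a relation $\prod_{s\in\D_I^T}(X^T\cdot t-s+z)=0$ among the $X_i^T$, and at $q=0$ these collapse to the Postnikov--Shapiro power relations $\bigl(\sum_{i\in I}x_i\bigr)^{d_I^T+1}$ where $d_I^T$ is now the number of edges in the cut $\partial I$ \emph{not} counting the special ``anchored'' direction forced by the vertex $g$ — equivalently $d_I^T=d_I$ for $I\not\ni g$ and $d_I^T=d_I$ for $I\ni g$ but with one extra relation of exponent $0$ (i.e.\ $1\equiv 0$?) no — rather the point is that the $f_I^T=0$ relations cut the external algebra $\C_G=\B_G$ down precisely to the ideal $J_G^0$ presentation of $\B_G^T$. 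So the clean route is: show that $\C_G^T$ is a quotient of $\C_G$ and that its defining ideal contains $J_G^0/J_G^1$, giving $\dim\C_G^T\le\dim\B_G^T$; then show $\dim\C_G^T\ge\dim\B_G^T$ by a dimension count, either via a Gröbner/initial-monomial comparison of the square-free presentation $\Phi_{G,0}^T$ with the known $\B_G^T$, or by noting that the $X_i^T$ satisfy \emph{only} the relations generated by the $\B_G^T$ ones (this is where Proposition~\ref{prop:ann} gives completeness for the Hecke case and one degenerates $q\to0$). The sandwich $\dim\B_G^T=\dim\Psi_{G,q}^T\ge\dim\Psi_{G,0}^T=\dim\C_G^T\ge\dim\B_G^T$ then forces equality and an isomorphism, exactly as in Theorem~\ref{thm:quo-Hecke}.

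For the graded refinement I would, as in the second half of the proof of Theorem~\ref{thm:same}, pass to a graded lexicographic order on the monomials in $\{u_e^T\}$ and in $\{\phi_e\}=\{x_e\}$ and check that the Gröbner bases of $\C_G^T$ and of $\B_G^T$ correspond degree by degree; since the Hilbert polynomial of $\B_G^T$ is $T_G(1,1/t)\cdot t^{e(G)-v(G)+c(G)}$ and the coefficient of $t^k$ in $T_G(1,1/t)t^{\,e(G)-v(G)+c(G)}$ is exactly the number of spanning trees $T$ with external activity $e(G)-e(T)-k$ (here internal activity is summed out because the first Tutte argument is set to $1$), the statement about graded components follows immediately. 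For non-connected $G$ one reduces to the connected case by taking products of the per-component algebras, noting $\B_G^T=\bigotimes_c \B_{G_c}^T$ and that spanning trees multiply.

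The main obstacle I expect is the lower bound $\dim\C_G^T\ge\dim\B_G^T$, i.e.\ showing the $X_i^T$ do not satisfy any relations beyond those defining $\B_G^T$: in the external case this is handled by Theorem~\ref{thm:PS} together with the observation that the cut-power relations can be rewritten in lower degree, and in the Hecke case by the completeness in Proposition~\ref{prop:ann}; for the tree algebra one must check that imposing the extra anchored-cut relations $f_I^T=0$ really does cut $\B_G$ down to $\B_G^T$ and nothing smaller, which amounts to verifying that the degeneration of the Proposition~\ref{prop:ann} relations at $q=0$ generates \emph{exactly} $J_G^0$ (not a strictly larger ideal). This is plausible because the relations $f_I^T=0$ are ``square-free witnesses'' of the $J_G^0$ generators, but making the initial-ideal comparison rigorous — i.e.\ that no spurious leading terms appear — is the technical heart; it is essentially the computation already carried out in \cite{PS} for $\B_G^T$, transported through the square-free model $\Phi_{G,0}^T$.
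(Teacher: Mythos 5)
There is no proof of Theorem~\ref{thm:PS-tree} in the paper to compare against: the tag ``cf.~\cite{PS}'' means the statement is imported from Postnikov--Shapiro, and the paper in fact uses it (and its internal analogue from \cite{AP,HR}) as the \emph{anchor} of its own arguments --- the displayed chain in the proof of Theorems~\ref{thm:Internal}, \ref{thm:same-tree} and \ref{thm:quo-Hecke-tree} terminates in the known equality $\dim(\B_G^{In})=T_G(0,1)$, and the tree case would likewise terminate in $\dim(\B_G^T)=T_G(1,1)$. Your argument therefore has to be self-contained, and as written it is circular at the decisive link: the sandwich $\dim\B_G^T=\dim\Psi_{G,q}^T\ge\dim\Psi_{G,0}^T=\dim\C_G^T\ge\dim\B_G^T$ starts from $\dim\Psi_{G,q}^T=\dim\B_G^T$, which is Theorem~\ref{thm:same-tree}, and the only available proof of that theorem presupposes the dimension formula for $\B_G^T$, i.e.\ the very statement you are proving.

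The other nontrivial link, $\dim\C_G^T\ge\dim\B_G^T$, which you correctly single out as the heart, is not supplied by any tool in the paper. Proposition~\ref{prop:ann} gives only the minimal annihilating polynomial of a single linear form $X\cdot t$, not completeness of the relations among the $X_i^T$, and a $q\to 0$ specialization can only bound $\dim\C_G^T$ \emph{from above} by the generic count (the rank of a matrix depending polynomially on $q$ can only drop at $q=0$), so the score-vector count $T_G(1,1)$ from Theorem~\ref{thm:different-tree} cannot be converted into the lower bound you need. What the square-free model does give for free is the opposite inequality: after internal cancellations $\sum_{i\in I}X_i^T$ is supported on the cut, so $\bigl(\sum_{i\in I}X_i^T\bigr)^{d_I}=\pm\, d_I!\, f_I^T\big|_{q=0}=0$ in $\Phi_{G,0}^T$ for proper $I$, whence a surjection $\B_G^T\twoheadrightarrow\C_G^T$ and $\dim\C_G^T\le\dim\B_G^T$; your middle paragraph, with its abandoned attempt at exponents $d_I^T$, is groping for exactly this computation. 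The genuinely hard facts --- injectivity of that surjection, $\dim\B_G^T$ equal to the number of spanning trees, and the external-activity description of the graded components --- are precisely the monomial-ideal deformation carried out in \cite{PS} (or the zonotopal theory of \cite{AP,HR}); your sketch defers them (``essentially the computation already carried out in \cite{PS}'') rather than proving them, and the claim that the Gr\"obner bases of $\C_G^T$ and $\B_G^T$ ``correspond degree by degree'' is a restatement of the graded assertion, not an argument for it. So the proposal reduces the theorem either to itself or to the reference it was meant to replace.
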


When some of $q_e=0$ the algebra $\Psi_{G,Q}^{In}$ is not correctly defined, for example, from the above definition $\Psi_{G,0}^{In}$  coincides with $\Psi_{G,0}^{T}$. But we can upgrade the definition for the case when all $q_e=0$.

Let  $\Phi_{G,0}^{In}$ be the graded commutative algebra over $\bK$ 
generated by the variables $\phi_e^{In}, e \in G$, with the defining relations:
$$(\phi_e^{In})^2 = 0, \quad \text {for every edge}\; e\in G$$
and 
 \begin{multline*} \left(\frac{ f_I^{In}-f_{\bar{I}}^{In}  }{q}\right)\Big{|}_{q=0}=0,
 \text{\ for every subset $g\in I\subseteq V(G)$}.
 \end{multline*}

Let $\C_G^{In}$ be the subalgebra of  $\Phi_{G,0}^{In}$ generated by the elements
$$X_i^{In}=\sum_{e: \ i\in e} c_{i,e}  \phi_e^{In}, \ i\in[n],$$
where $c_{i,e}$ are the same as in~\eqref{eq:def_cie}.
\begin{theorem}
\label{thm:Internal}
 For any graph $G$, the algebras  ${\B}_{G}^{In}$ and ${\C}_G^{In}$ are isomorphic,
 their total dimension over $\bK$ is equal to $T_G(0,1)$ and their Hilbert series is given by $$\HH  _{{\C}_G^{In}}(t)=T_G\left(0,\frac{1}{t}\right)\cdot t^{e(G)-v(G)+c(G)}.$$
\end{theorem}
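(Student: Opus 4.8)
\textbf{Proof proposal for Theorem~\ref{thm:Internal}.}

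The plan is to mirror the two-step strategy that already worked for the forest algebra (Theorem~\ref{thm:PS}) and the tree algebra (Theorem~\ref{thm:PS-tree}): first exhibit an isomorphism between the quotient presentation $\B_G^{In}$ and the subalgebra presentation $\C_G^{In}$, and then identify the common Hilbert series with the claimed Tutte-polynomial specialization. For the first step I would set up the natural surjection $\bbK[x_1,\dots,x_n]\twoheadrightarrow \C_G^{In}$, $x_i\mapsto X_i^{In}$, and show that its kernel contains $J_G^{-1}$; the point is that each generator $p_I^{(-1)}=\bigl(\sum_{i\in I}x_i\bigr)^{d_I-1}$ maps to zero in $\C_G^{In}$. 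This should follow by the same mechanism as in the proof of Theorem~\ref{thm:same}: expand $\bigl(\sum_{i\in I}X_i^{In}\bigr)^{d_I-1}$ in terms of the edge variables $\phi_e^{In}$, observe that every surviving monomial must use a subset of the $d_I$ cut edges $E(I,\bar I)\cup E(\bar I,I)$, and use the derivative-at-$q=0$ relation $\bigl(\frac{f_I^{In}-f_{\bar I}^{In}}{q}\bigr)\big|_{q=0}=0$ to kill exactly these "one-below-the-cut" monomials. So the induced map $\B_G^{In}\to\C_G^{In}$ is a well-defined surjection.

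For the reverse inequality of dimensions (which upgrades the surjection to an isomorphism) I would invoke the known Hilbert series of $\B_G^{In}$ from~\cite{AP,HR}, quoted just before the theorem, to get $\dim_\bbK \B_G^{In}=T_G(0,1)$, and then produce a spanning set of $\C_G^{In}$ of that cardinality — indexed, as in~\cite{PS}, by spanning trees with no internally active edges, or equivalently by the broken-circuit-type combinatorics encoding $T_G(0,1)$. The cleanest route is probably a Gröbner-basis / standard-monomial argument: pick a graded term order on the $\phi_e^{In}$, show that the relations $(\phi_e^{In})^2$ together with the leading terms of the $q=0$ internal relations have standard monomials counted by $T_G(0,1/t)\cdot t^{e(G)-v(G)+c(G)}$, and note this matches the known series of $\B_G^{In}$ degree by degree. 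Since the surjection $\B_G^{In}\to\C_G^{In}$ is graded (or filtered) and both sides have the same finite dimension in each degree, it is an isomorphism, and the Hilbert series of $\C_G^{In}$ is the asserted one.

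The main obstacle I expect is verifying precisely that the $q=0$ internal relations $\bigl(\frac{f_I^{In}-f_{\bar I}^{In}}{q}\bigr)\big|_{q=0}$ do the job on both ends: that they are strong enough to force all of $J_G^{-1}$ to vanish in $\C_G^{In}$ (so that $\dim\C_G^{In}\le\dim\B_G^{In}$), yet not so strong that they collapse the algebra below dimension $T_G(0,1)$. Concretely one must check that differentiating $f_I^{In}-f_{\bar I}^{In}$ and setting $q=0$ yields the "square-free" relation $\sum_{e\in E(I,\bar I)\cup E(\bar I,I)} \pm \prod_{e'\neq e}\phi_{e'}^{In}$ (a signed sum over the $d_I$ cut edges of the product of the other $d_I-1$ of them), and that this family of relations, one per $I\ni g$, has exactly the right linear span — this is the "square-free definition of the internal algebra" advertised in the abstract, and pinning down the combinatorics of its standard monomials is where the real work lies. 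Once that identification is in hand, the isomorphism and the Hilbert-series formula follow formally from the previously established results.
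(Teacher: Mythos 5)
Your first step coincides with the paper's own argument: expanding $\bigl(\sum_{i\in I}X_i^{In}\bigr)^{d_I-1}$ over the square-free variables $\phi_e^{In}$ gives, up to a factor $(d_I-1)!$ and a global sign, precisely the relation $\bigl(\frac{f_I^{In}-f_{\bar I}^{In}}{q}\bigr)\big|_{q=0}$, so every generator of $J_G^{-1}$ vanishes in $\C_G^{In}$ and one obtains a surjection $\B_G^{In}\twoheadrightarrow\C_G^{In}$, hence $\dim\C_G^{In}\le\dim\B_G^{In}=T_G(0,1)$ using the known Hilbert series of $\B_G^{In}$ from~\cite{AP,HR}. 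This half is sound.

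The gap is the opposite inequality $\dim\C_G^{In}\ge T_G(0,1)$, which you correctly flag as the main obstacle but never actually close. Both devices you propose produce only upper bounds: a spanning set of $\C_G^{In}$ of cardinality $T_G(0,1)$ bounds $\dim\C_G^{In}$ from above (the same direction the surjection already gives), and counting standard monomials of the leading terms of a listed set of relations bounds from above the dimension of the algebra they present --- it cannot rule out that the ideal generated by these relations is larger, unless you prove they form a Gr\"obner basis, which is the lower-bound problem in disguise. Moreover, those standard monomials live in the $\phi_e^{In}$ variables and hence control the ambient algebra $\Phi_{G,0}^{In}$, not the subalgebra $\C_G^{In}$ generated by the $X_i^{In}$, so even a complete Gr\"obner computation there would not yield the Hilbert series of $\C_G^{In}$ ``degree by degree.'' The paper supplies the missing lower bound by an entirely different mechanism, which is the real content of the theorem: the defining relations of $\Phi_{G,0}^{In}$ are the top-degree forms of those of the weighted deformation $\Phi_{G,q}^{In}$, whence $\dim\C_G^{In}\ge\dim\Psi_{G,q}^{In}$, and $\dim\Psi_{G,q}^{In}=T_G(0,1)$ by Theorem~\ref{thm:different-tree} together with the classical fact that the number of distinct strong-connected score vectors equals $T_G(0,1)$; sandwiching with $\dim\B_G^{In}=T_G(0,1)$ then forces the surjection to be an isomorphism and gives the stated Hilbert series. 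Your proposal contains no ingredient playing the role of this lower bound (neither the deformation/score-vector count nor an explicit linearly independent family in $\C_G^{In}$), so as written the isomorphism does not follow.
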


For Hecke deformations, we have two similar theorems. Their proofs are analogous to Theorem~\ref{thm:same} and to Theorem~\ref{thm:quo-Hecke} resp.
\begin{theorem}
\label{thm:same-tree}
For any loopless connected graph $G$, the filtrations of  its Hecke deformation $\Psi_{G,q}^T$ ($\Psi_{G,q}^{In}$) induced by $X_i^T$ ($X_i^{In}$) and induced from the algebra $\Phi_{G,q}^T$ ($\Phi_{G,q}^{In}$)   coincide. Furthermore the Hilbert polynomial $\HH  _{\Psi_{G,q}^T}(t)$ and $\HH  _{\Psi_{G,q}^{In}}(t)$  of this filtration sre given by 
$$\HH  _{\Psi_{G,q}^T}(t)=T_G\left(1,\frac{1}{t}\right)\cdot t^{e(G)-v(G)+c(G)},$$
$$\HH  _{\Psi_{G,q}^{In}}(t)=T_G\left(0,\frac{1}{t}\right)\cdot t^{e(G)-v(G)+c(G)}.$$
\end{theorem}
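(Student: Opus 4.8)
\textbf{Proof proposal for Theorem~\ref{thm:same-tree}.}

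The plan is to follow, almost verbatim, the two-step strategy used for Theorem~\ref{thm:same} and Theorem~\ref{thm:quo-Hecke}, adapting it to the tree and internal settings. First I would prove the coincidence of filtrations. Suppose for contradiction that some $y = f(X_1^T,\ldots,X_n^T) \in \Psi_{G,q}^T$ has degree $d$ as a polynomial in the generators but strictly smaller degree in terms of the edge variables $u_e^T$. Write $f = f_d + f_{<d}$ with $f_d$ homogeneous of degree $d$. As in the proof of Theorem~\ref{thm:same}, the top part $f_d(\widehat{X}_1^T,\ldots,\widehat{X}_n^T)$ must vanish in $\C_G^T = \Psi_{G,0}^T$, since otherwise the leading term would survive in $\Phi_{G,q}^T$. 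By Theorem~\ref{thm:PS-tree}, $\C_G^T \cong \B_G^T$, so all relations among the $\widehat{X}_i^T$ are the defining relations of $\B_G^T = \bbK[x_1,\ldots,x_n]/J_G^0$, namely the powers $\left(\sum_{i\in I} x_i\right)^{d_I}$ for all nonempty $I$. Hence $f_d$ lies in that ideal, and one can write $f_d(x_1,\ldots,x_n) = \sum_{I:\ d_I \le d} r_I(x)\cdot\left(\sum_{i\in I} x_i\right)^{d_I}$ with $r_I$ homogeneous of degree $d-d_I$. But in $\Psi_{G,q}^T$ the element $\left(\sum_{i\in I} X_i^T\right)^{d_I}$ can be rewritten as an element of strictly smaller degree in the $\{X_i^T : i \in I\}$ — here I would invoke the tree-analogue of Proposition~\ref{prop:ann}, i.e.\ that there is an annihilating polynomial of $\sum_{i\in I} X_i^T$ of degree $d_I$ (one degree lower than in the forest case), coming from the extra relations $f_I^T = 0$. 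Substituting this back produces a polynomial $g$ of degree $< d$ with $y = g(X_1^T,\ldots,X_n^T)$, a contradiction. The internal case is identical with $d_I$ replaced by $d_I - 1$ and $J_G^0$ replaced by $J_G^{-1}$, using Theorem~\ref{thm:Internal} and the relations $f_I^{In}=0$.

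For the Hilbert polynomial statement, the second part of the proof of Theorem~\ref{thm:same} transfers directly: having shown the two filtrations coincide, it suffices to compare graded lexicographic orders on monomials in $\{u_e^T\}$ and $\{\phi_e^T\}$ (resp.\ the $In$ variables). The map $u_e^T \mapsto \phi_e^T$ (and extending the square relation $u_e^2 = q u_e$ to $\phi_e^2 = 0$, plus the deformed/undeformed versions of $f_I^T$) induces a bijection between the Gröbner bases of $\Psi_{G,q}^T$ and of $\C_G^T$ with matching leading terms, so the associated graded algebras have equal Hilbert polynomials. Combined with Theorem~\ref{thm:PS-tree} and Theorem~\ref{thm:Internal}, this gives the two claimed formulas in terms of $T_G\left(1,\tfrac1t\right)$ and $T_G\left(0,\tfrac1t\right)$.

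I expect the main obstacle to be verifying that the extra defining relations $f_I^T = 0$ (resp.\ $f_I^{In}=0$) actually deform to the expected relations in $\C_G^T = \Psi_{G,0}^T$ and that they do lower the degree of $\left(\sum_{i\in I} X_i^T\right)^{d_I}$ — in other words, establishing the tree/internal analogue of Proposition~\ref{prop:ann}. In the forest case the annihilating polynomial of $\sum_{i\in I}X_i$ had degree equal to the number of possible values of the corresponding score-vector coordinate over orientations, which was $D_I + 1 = d_I + 1$; for the tree algebra one needs that fixing a root $g$ (and for $g\in I$) cuts this down by one, and for the internal algebra by one more. This is the point where the specific form of $f_I^*$ — a product over $E(I,\bar I)$ of $u_e^*$ and over $E(\bar I,I)$ of $(u_e^* - q_e)$ — has to be unwound carefully, ideally via the coordinate/Hadamard-product description of Lemma~\ref{lem:basis} and Lemma~\ref{lem:Xi}, to confirm that it precisely excises the orientations that would otherwise contribute the top-degree term. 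Once that combinatorial bookkeeping is in place, the rest is a routine transcription of the arguments already given.
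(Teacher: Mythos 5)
Your plan transplants the proof of Theorem~\ref{thm:same}, but the step that powers that proof --- ``if $f_d(\widehat{X}_1,\ldots,\widehat{X}_n)\neq 0$ then the leading term survives, so the degree cannot drop'' --- is automatic there only because $\Phi_{G,q}$ has the monomial basis $\{\prod_{e\in E'}u_e\}$ independently of $q$, so its associated graded is exactly $\Phi_{G,0}$. For $\Phi_{G,q}^T$ and $\Phi_{G,q}^{In}$ this is precisely what is in doubt: they are quotients of $\Phi_{G,q}$ by $q$-dependent ideals generated by the $f_I^*$, and a priori their associated graded algebras are only \emph{quotients} of the corresponding $q=0$ algebras; the leading-form ideal could be strictly larger than the ideal generated by the cut monomials, in which case a nonzero $f_d(\widehat{X}^T)$ in $\C_G^T$ could still die in $\mathrm{gr}\,\Phi_{G,q}^T$ and your contradiction evaporates. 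Justifying ``the leading term survives'' is a flatness statement equivalent to nontrivial dimension identities (e.g.\ that $\Dim\Phi^T_{G,q}$, the number of $g$-connected orientations, equals $\Dim\Phi^T_{G,0}$, the number of connected spanning subgraphs), and you offer no argument for it. The internal case has two further problems: the naive $q=0$ specialization of $\Phi^{In}_{G,Q}$ is the \emph{tree} algebra, not $\C_G^{In}$ (the paper says this explicitly, which is why it introduces the divided relation $\bigl((f_I^{In}-f_{\bar I}^{In})/q\bigr)\big|_{q=0}$), so ``$f_d$ must vanish in $\Psi^{In}_{G,0}$'' is not even the right statement; and invoking Theorem~\ref{thm:Internal} is circular relative to the paper, where that theorem is only obtained as part of the same combined argument. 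The point you flag as the main obstacle --- the tree/internal analogue of Proposition~\ref{prop:ann} --- is actually the unproblematic part (it follows from the quotient-coordinate description used in the proof of Theorem~\ref{thm:different-tree}); the real gap is the one above, and it also undermines your Gr\"obner-basis step, since matching leading terms for the deformed and undeformed $f_I^*$-ideals is again exactly the unproved flatness.

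The paper avoids all of this by a dimension sandwich proving Theorems~\ref{thm:Internal}, \ref{thm:same-tree} and \ref{thm:quo-Hecke-tree} simultaneously: $\Psi^{In}_{G,q}$ satisfies the relations of Theorem~\ref{thm:quo-Hecke-tree} (the analogue of Proposition~\ref{prop:ann}); if $\Psi'^{In}_{G,q}$ denotes the algebra presented by only those relations, then $\Dim(\B_G^{In})=\Dim(\Psi'^{In}_{G,0})\geq\Dim(\Psi'^{In}_{G,q})\geq\Dim(\Psi^{In}_{G,q})$, while Theorem~\ref{thm:different-tree} plus the classical facts that the numbers of distinct root-connected and strong-connected score vectors are $T_G(1,1)$ and $T_G(0,1)$ give $\Dim(\Psi^{In}_{G,q})=T_G(0,1)=\Dim(\B_G^{In})$; equality is forced throughout, yielding the coincidence of filtrations, the stated Hilbert polynomials, and the presentation at once, with no statement about the ambient $\Phi^*_{G,q}$ needed. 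To salvage your route you would have to prove the flatness input independently (or produce Gr\"obner bases with $q$-independent leading terms for the ideals generated by $u_e^2-qu_e$ and the $f_I^*$), which is essentially as hard as, and less direct than, the sandwich argument.
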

\begin{theorem}
\label{thm:quo-Hecke-tree}
Let $G$ be a graph and $q\in \bbK$ ($q_e=q,\ \forall e\in G$). Then all defining relations between $X_i^*,\ i\in [n]$ ($*=T$ or $In$) are given by
$$\prod_{k=-\vec{d}_I}^{\cev{d}_I-1}\left(\sum_{i\in I} X_i^T -qk\right)=0$$
and
$$\prod_{k=-\vec{d}_I+1}^{\cev{d}_I-1}\left(\sum_{i\in I} X_i^{In} -qk\right)=0,$$
where $I$ is any subset of vertices and $\vec{d}_I$ (resp. $\cev{d}_I$) is the number of edges $e=(i,j)\in G:\ i\in I,\ j\notin I$ and $i>j$ (resp. $i<j$).  
\end{theorem}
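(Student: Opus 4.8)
The plan is to mimic the proof of Theorem~\ref{thm:quo-Hecke}, which reduces the ``quotient presentation'' statement to a dimension count, using Theorem~\ref{thm:same-tree} in place of Theorem~\ref{thm:same} and Proposition~\ref{prop:ann} adapted to the tree and internal cases. First I would establish the analogue of Proposition~\ref{prop:ann} for the generators $X_i^T$ and $X_i^{In}$. For this I would rerun the coordinate bookkeeping of \S~\ref{basis}: by Lemma~\ref{lem:basis} the algebra $\Phi_{G,q}^T$ (resp.\ $\Phi_{G,q}^{In}$) embeds, via $z\mapsto\widetilde z$, into a Hadamard-product algebra indexed by those orientations $\vec G$ that survive the relations $f_I^*=0$. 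The key observation is that the relation $f_I^T=0$ (for $g\in I$) kills exactly the orientations in which every edge of $E(I,\bar I)$ points out of $I$ and every edge of $E(\bar I,I)$ points out of $I$ as well --- i.e.\ the orientations with no edge directed into $I$ across the cut --- so that after imposing all such relations the surviving orientations are precisely those ``$g$-connected'' orientations indexed by spanning trees (this is the combinatorial content behind Theorem~\ref{thm:PS-tree}); for the internal algebra the relations $f_I^{In}=0$ and $f_{\bar I}^{In}=0$ kill orientations with no in-edge into $I$ \emph{or} no in-edge into $\bar I$, leaving the orientations associated with ``internally active'' trees. Running Lemma~\ref{lem:dim} on $\sum_{i\in I}X_i^*\cdot\,$ over the surviving coordinate set then produces the minimal annihilating polynomial, and by Lemma~\ref{lem:Xi} the shift $z=\sum_{c_{i,e}=-1}q_e$ translates the score-vector values into the integers $k$ ranging over the stated interval: for the tree case the values of $\sum_{i\in I}D^+_{\vec G}(i)$ over surviving orientations run over $q\cdot\{-\vec d_I,\dots,\cev d_I-1\}$ (one fewer value on the top end, because the all-into-$I$ orientation is excluded), and for the internal case over $q\cdot\{-\vec d_I+1,\dots,\cev d_I-1\}$ (excluded on both ends).

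With the annihilating-polynomial statement in hand, the second step is the dimension-counting argument exactly as in the proof of Theorem~\ref{thm:quo-Hecke}. Let $\Psi_{G,q}^{T\prime}$ be the commutative algebra on generators $x_i$ modulo the relations $\prod_{k=-\vec d_I}^{\cev d_I-1}(\sum_{i\in I}x_i-qk)=0$ for all $I\ni g$, and similarly $\Psi_{G,q}^{In\prime}$. Since these relations hold in $\Psi_{G,q}^T$ (by the first step), there is a surjection $\Psi_{G,q}^{T\prime}\twoheadrightarrow\Psi_{G,q}^T$, giving $\Dim(\Psi_{G,q}^{T\prime})\ge\Dim(\Psi_{G,q}^T)$. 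On the other hand, one checks --- by the same argument PS use for $\B_G^T$, i.e.\ the relations $\prod_{k=-\vec d_I}^{\cev d_I-1}(\sum_{i\in I}x_i-qk)=0$ are, up to the invertible linear change $x_i\mapsto x_i + \text{const}$, precisely the defining relations of $\B_G^T$ after the shift $q\to 1$ scaling --- that $\Dim(\B_G^T)\ge\Dim(\Psi_{G,q}^{T\prime})$. Combining with $\Dim(\Psi_{G,q}^T)=\Dim(\Psi_{G,0}^T)=\Dim(\C_G^T)=\Dim(\B_G^T)$ from Theorem~\ref{thm:same-tree} and Theorem~\ref{thm:PS-tree} forces all the inequalities to be equalities, so the surjection $\Psi_{G,q}^{T\prime}\to\Psi_{G,q}^T$ is an isomorphism; hence the listed relations are a complete set of defining relations. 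The internal case is identical, using Theorem~\ref{thm:Internal} and the internal part of Theorem~\ref{thm:same-tree} for the dimension matching.

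I expect the main obstacle to be the first step, and specifically the precise identification of which orientations survive the relations $f_I^*=0$ in coordinates, hence of the exact range of $k$. The subtlety is that $f_I^T$ is a product of factors $u_e^*$ and $(u_e^*-q_e)$, and one must check carefully, using $\alpha_{E_1}\alpha_{E_2}=\alpha_{E_1\cup E_2}$ and Lemma~\ref{lem:basis}, which coordinate of $\widetilde{f_I^T}$ is nonzero: this comes down to noting that $u_e^*$ in the $\alpha$-coordinates is, up to $q_e$, the indicator of the orientation bit for $e$, so $f_I^T$ is (a scalar times) the indicator of the single orientation where the entire cut is directed consistently, and imposing $f_I^T=0$ for all $I\ni g$ removes exactly the non-$g$-connected orientations. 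One must also verify that no \emph{further} collapsing of coordinates occurs --- i.e.\ that the surviving coordinate set has full size $T_G(1,1)$ (resp.\ $T_G(0,1)$) as needed --- but this is guaranteed a posteriori by the dimension match in step two, so it need not be proved independently. The remaining routine points --- that the shift $z$ is the same one appearing in Lemma~\ref{lem:Xi}, that the genericity-of-$r_i$ trick of the proof of Theorem~\ref{thm:different} is not needed here because $q$ is a single scalar and the score-vector values are already distinct multiples of $q$, and the bookkeeping of $\vec d_I$ versus $\cev d_I$ --- I would handle briskly, referring back to \S~\ref{sec:hecke} and \S~\ref{basis}.
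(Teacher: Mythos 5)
Your overall architecture is the same as the paper's: first show that the listed relations hold in $\Psi_{G,q}^T$ and $\Psi_{G,q}^{In}$ via the tilde-coordinate picture (Lemmas~\ref{lem:basis}--\ref{lem:dim}, the support computation for $f_I^*$), then sandwich dimensions so that the surjection from the abstractly presented algebra $\Psi'$ onto $\Psi$ is forced to be an isomorphism. Two points, however, are genuine problems. First, your justification of $\Dim(\B_G^T)\ge\Dim(\Psi_{G,q}^{T\prime})$ is wrong as stated: no substitution $x_i\mapsto x_i+\mathrm{const}$ turns the product $\prod_{k}\bigl(\sum_{i\in I}x_i-qk\bigr)$ of $d_I$ pairwise distinct linear forms into the single power $\bigl(\sum_{i\in I}x_i\bigr)^{d_I}$, and different $I$ would require incompatible shifts anyway. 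The correct (and standard) reason is degeneration: the top homogeneous part of each deformed relation is exactly the defining relation of $\B_G^T$ (resp.\ $\B_G^{In}$), so the initial ideal of the deformed ideal contains the defining ideal, giving $\Dim(\Psi'^{*}_{G,q})\le\Dim(\B_G^{*})$; this is precisely the inequality the paper invokes in the chains for Theorem~\ref{thm:quo-Hecke} and in $\Dim(\Psi_{G,0}'^{In})\geq\Dim(\Psi_{G,q}'^{In})$. Second, you close the sandwich using Theorem~\ref{thm:same-tree} (and Theorem~\ref{thm:Internal}), but in the paper those statements have no independent proof: they are proved by the very same combined argument that proves Theorem~\ref{thm:quo-Hecke-tree}, so as written your plan is circular within the paper's logic. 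The paper instead gets the crucial equality $\Dim(\Psi_{G,q}^{T})=T_G(1,1)$ and $\Dim(\Psi_{G,q}^{In})=T_G(0,1)$ from Theorem~\ref{thm:different-tree} (proved independently by the ideal/coordinate argument you are essentially redoing in your Step~1) together with the classical fact that distinct root-connected and strongly-connected score vectors are counted by $T_G(1,1)$ and $T_G(0,1)$, plus the known dimensions of $\B_G^T$ and $\B_G^{In}$ from \cite{PS} and \cite{AP,HR}. The fix is easy—cite Theorem~\ref{thm:different-tree} instead of Theorems~\ref{thm:same-tree} and~\ref{thm:Internal}—but it changes which previously established results carry the weight.

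A smaller but real issue is the directional bookkeeping in your Step~1, which is internally inconsistent: you first say $f_I^T=0$ kills the orientations with \emph{no} cut edge directed into $I$, but later say the excluded score value is the all-into-$I$ one; these are opposite. In the paper's $E'$-to-orientation convention the support of $f_I^T$ consists of orientations in which \emph{every} cut edge is directed into $I$, so for $I\ni g$ the top value $k=\cev{d}_I$ is the one removed, while for the complementary cut it is the bottom value that disappears (the paper's own ``for any subset $I$'' phrasing and its definition of $g$-connectedness are loose on exactly this point). Since the claimed range of $k$ in the theorem depends on this, you cannot handle it ``briskly''; it is the place where the stated relation for a given $I$ is either tight, or only valid after replacing $I$ by its complement. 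None of this affects the dimension count, but it does affect the literal form of the relations you are asserting in Step~1.
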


\bigskip

In general case we should calculate root-connected and strong-connected score vectors instead  all score vectors.
\begin{definition}
Orientation $\vec{G}$ is  called a $g$-connected (strong-connected) orientation if from  any vertex there is a path to $g$ (and from $g$ to it). The corresponding score vector $D_{\vec{G}}^+$ is called a $g$-connected (strong-connected) score vector. \end{definition}
\begin{theorem}
\label{thm:different-tree}
 For any loopless weighted connected graph $G$ with a root $g$, the dimensions of the algebras $\Psi_G^T$ and $\Psi_G^{In}$ are equal to the number of distinct $g$-connected score vectors and to the number of distinct strong-connected score vectors resp.
\end{theorem}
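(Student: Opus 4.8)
\textbf{Proof proposal for Theorem~\ref{thm:different-tree}.}

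The plan is to mimic the proof of Theorem~\ref{thm:different}, working in the coordinate model of $\Phi_{G,Q}^T$ (resp. $\Phi_{G,Q}^{In}$) supplied by Lemma~\ref{lem:basis}, but now restricted to the quotient by the ideal generated by the polynomials $f_I^*$. First I would set up the bijection between subsets $E'\subseteq E$ and orientations $\vec{G}$ exactly as before, and describe $\Phi_{G,Q}^T$ in these Hadamard coordinates: the relation $(u_e^T)^2 = q_e u_e^T$ is automatic in the coordinate model, and the relation $f_I^T=0$ says precisely that the $\vec{G}$-coordinate vanishes whenever the orientation $\vec{G}$ has \emph{all} edges across the cut $(I,\bar I)$ pointing out of $I$ (for $T$, with $g\in I$) — i.e. $\vec{G}$ is \emph{not} $g$-connected. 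Dually, $f_I^{In}=0$ together with $f_{\bar I}^{In}=0$ kills the coordinate of any orientation that has a directed cut, i.e. that is not strong-connected. So the upshot of this first step should be: $\Phi_{G,Q}^T$ (as a coordinate space) is the span of the standard basis vectors indexed by $g$-connected orientations, and $\Phi_{G,Q}^{In}$ the span of those indexed by strong-connected orientations; the product is still coordinatewise Hadamard.

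Next I would re-run Lemma~\ref{lem:Xi}: the generator $X_i^*$ in these (now truncated) coordinates is $[D_{\vec{G}}^+(i)]_{\vec{G}} - (\sum_{c_{i,e}=-1} q_e)\cdot I$, the index $\vec{G}$ now ranging only over $g$-connected (resp. strong-connected) orientations. As in the proof of Theorem~\ref{thm:different}, replace the generators $X_i^*$ by $A_i^* := [D_{\vec{G}}^+(i)]_{\vec{G}}$. The upper bound is then immediate: two $g$-connected (resp. strong-connected) orientations with equal score vectors have equal coordinates in every $A_i^*$ and in $I$, hence in every element of the subalgebra $\Psi_G^*$, so $\dim(\Psi_G^*) \le$ the number of distinct $g$-connected (resp. strong-connected) score vectors. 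For the lower bound I would take a generic linear combination $R = r_0 + r_1 A_1^* + \cdots + r_n A_n^*$ with $r_i\in\bQ$ generic; then the coordinates of $\widetilde R$ are nonzero and two of them agree iff the corresponding score vectors agree, so Lemma~\ref{lem:dim} gives that the subalgebra generated by $R$ already has dimension equal to the number of distinct $g$-connected (resp. strong-connected) score vectors, and since $R\in\Psi_G^*$ this is a lower bound. Combining the two bounds finishes the proof.

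The main obstacle I expect is the first step: verifying precisely that, in the coordinate model, the quotient by the ideal generated by all $f_I^*$ is exactly the coordinate subspace spanned by the $g$-connected (resp. strong-connected) orientations, with no further collapse. One direction is easy — $f_I^*$ clearly annihilates all coordinates except those of orientations crossing the cut $(I,\bar I)$ the "wrong" way, so the quotient is at least supported on $g$-connected (resp. strong-connected) orientations. The delicate direction is that these relations generate the \emph{whole} ideal of functions vanishing on that support, equivalently that $\dim \Phi_{G,Q}^T$ equals the number of $g$-connected orientations (and similarly for $In$); for this I would invoke the known Hilbert-series/dimension statements (Theorem on $\B_G^T$, $\B_G^{In}$ and Theorem~\ref{thm:PS-tree}) together with the classical facts that $T_G(1,1)$ counts $g$-connected orientations and $T_G(0,1)$ counts strong-connected ones, plus the Hecke-deformation comparison (Theorem~\ref{thm:same-tree}) to transport the dimension count from $q=0$ to generic weighted $Q$; alternatively one argues directly that the $f_I^*$ form a Gröbner basis whose standard monomials are indexed by the desired orientations. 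A minor subtlety, already flagged in the paper, is that the definition of $\Phi_{G,Q}^{In}$ requires all $q_e\neq 0$, which is automatic here since $G$ is weighted.
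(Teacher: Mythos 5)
Your overall route is exactly the paper's: pass to the tilde (Hadamard) coordinates of Lemma~\ref{lem:basis}, identify $\Phi_{G,Q}^T$ (resp.\ $\Phi_{G,Q}^{In}$) with the coordinate subspace indexed by $g$-connected (resp.\ strong-connected) orientations, and then rerun the proof of Theorem~\ref{thm:different} (Lemma~\ref{lem:Xi}, a generic element $R$, Lemma~\ref{lem:dim}). That is precisely how the paper argues, so the body of your proposal is fine.

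The issue is the step you single out as the main obstacle: the justifications you offer for it do not work, although the step itself is immediate. First, $T_G(1,1)$ does not count $g$-connected orientations and $T_G(0,1)$ does not count strong-connected orientations; these evaluations count spanning trees, respectively the number of \emph{distinct score vectors} of such orientations (the orientation counts are $T_G(1,2)$ and $T_G(0,2)$), so the proposed comparison of $\dim\Phi_{G,Q}^T$ with the dimensions of $\B_G^T$, $\B_G^{In}$ rests on a false identity — the ambient algebra $\Phi_{G,Q}^T$ is genuinely larger than $\B_G^T$. Second, invoking Theorem~\ref{thm:same-tree} (or the dimension statement of Theorem~\ref{thm:Internal}) would be circular in the paper's logical order, since those results are deduced \emph{from} Theorem~\ref{thm:different-tree}; moreover they concern the Hecke case, and a ``generic transport'' cannot reach arbitrary nonzero weights, which is what the theorem asserts. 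Fortunately no such input is needed: because multiplication in tilde coordinates is the Hadamard product, the ideal $\mathcal{P}^*$ generated by the $f_I^*$ is automatically contained in the span of the delta vectors of non-$g$-connected (resp.\ non-strong-connected) orientations (each $f_I^*$ is supported there, and coordinatewise multiplication never enlarges supports), while conversely each such delta vector equals its Hadamard product with a suitable $f_I^*$ and hence lies in $\mathcal{P}^*$. Thus $\mathcal{P}^*$ is exactly that coordinate span, the quotient is the coordinate subspace indexed by the good orientations with no further collapse, and your upper/lower bound argument then finishes the proof — this short observation is exactly the content of the paper's proof.
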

\begin{proof}
The proof of Theorem~\ref{thm:different-tree} is more complicated than of Theorem~\ref{thm:different}, the key idea is that $\Psi_{G}^T$  and  $\Psi_{G}^{In}$ are quotient algebras of $\Psi_{G}$. We will prove it only for the case of $\Psi_{G}^T$, the second case is the similar. Namely, $$\Psi_{G}^T=\Psi_{G}/{\mathcal P^T},$$
where ${\mathcal P^T}\subset \Phi_G$ is  an ideal generated by $f_I,$ for $g\ni I\subseteq V(G)$.

Consider an element $f_I$ as expressed in  tilde-coordinates

\begin{equation}
[f_I]_{E'}=\left\{\begin{gathered}
1,\text{\ if $E'\cap (E(I,\bar{I})\cup E(\bar{I},I))=E(I,\bar{I})$}\\
0,\text{\ otherwise}
\end{gathered}\right.
\end{equation}
Since $\mathcal{P}^T$ is an ideal, any element $[0,\ldots,0,1,0,\ldots, 0]$ for which $1$ corresponds to non $g$-connected orientation $\vec{G}$ belongs to this ideal. It means that we can forget about coordinates such that corresponds to non $g$-connected $\vec{G}$ (in internal case, strong-connected).
\end{proof}

\begin{proof}[proof of Theorems~\ref{thm:Internal},~\ref{thm:same-tree} and  \ref{thm:quo-Hecke-tree}]
Here we present the proof only for the Internal case (for the case of trees, we already have an analogue of Theorem~\ref{thm:Internal}).  It is well know that for usual graphs the number of distinct strong-connected (root-connected) vectors is equal to $T(0,1)$ ($T(1,1,)$, i.e., the number of trees).

Similar to Theorem~\ref{thm:quo-Hecke}, the algebra $\Psi_{G,q}^{In}$ has relations from Theorem~\ref{thm:quo-Hecke-tree}. Let $\Psi_{G,q}'^{In}$ be the algebra which has only this relations, then we have $$\Dim(\B_G^{In})=\Dim(\Psi_{G,0}'^{In})\geq \Dim(\Psi_{G,q}'^{In})\geq \Dim(\Psi_{G,q}^{In}) = T(0,1) = \Dim(\B_G^{In}).$$
Now we proved Theorems~\ref{thm:same-tree} and  \ref{thm:quo-Hecke-tree}.
It remains to prove Theorem~\ref{thm:quo-Hecke}. From one side we know that 
$$\Dim(\C_G^{In})\geq \Dim(\Psi_{G,q}^{In}),$$ because we consider only maximal degrees inside each relation.
From another side we can check that $\C_G^{In}$ is a quotient algebra of $\B_G^{in}$.
Really, check the relation for a subset $I\subset V(G)$
$$\left(\sum_{i\in I} X_I\right)^{d_I-1}=\left(\sum_{e\in E(I,\bar{I})}\phi_e^{In}-\sum_{e\in E(\bar{I},I)}\phi_e^{In}\right)^{d_I-1}=f_I^{In}$$

Which gives that
$$T(0,1)=\Dim(\B_G^{In})\geq\Dim(\C_G^{In})\geq \Dim(\Psi_{G,q}^{In})=T(0,1),$$
then the dimensions of $\C_G^{In}$ and of $\B_G^{In}$ are the same, hence, algebras are isomorphic. 
\end{proof}

\begin{remark}
Note that in Theorem~\ref{thm:different-tree} (unlike Theorem~\ref{thm:different}) it is not true that if we change signs of some $q_e$, the dimension remains the same. Also we do not have combinatorial analogue of Theorem~\ref{thm:split}. In case of Hecke deformation it is still true (see Theorem~\ref{thm:same-tree}).
\end{remark}

\begin{problem}
Let $G$ be a connected graph with a root $g$,  given a partition $E=E_1\sqcup\ldots\sqcup E_k$ of edges and generic $q_1,\ldots,q_k\in \bK$ ($q_e=q_i$, for $e\in E_i$).
Describe the dimension of the algebra $\Psi_{G,Q}^T$ in terms of trees and forests.
\end{problem}


\bigskip

\noindent{\bf Acknowledgements} The authors want to thank Boris Shapiro for  useful discussions. The first author is grateful to the department of mathematics at Stockholm University for the hospitality in October 2016 when this project was carried out.

%

\end{document}